\numberwithin{equation}{section}
\newtheorem{counter1}{not to be used as environment} [section]
	\newtheorem{The}{Theorem}
	\newtheorem*{The*}{Theorem A}
	\newtheorem{Lem}[counter1]{Lemma}
	\newtheorem{Kor}[counter1]{Corollary}
	\theoremstyle{definition}
	\newtheorem{Def}[counter1]{Definition}
	\newtheorem{Bei}[counter1]{Example}
	\newtheorem{Bem}[counter1]{Remark}
\begin{document}

\title{Change-point tests under local alternatives for long-range dependent processes}
\author{Johannes Tewes\footnote{Fakult\"{a}t f\"{u}r Mathematik, Ruhr-Universit\"{a}t Bochum, 44780 Bochum, Germany, Email address: Johannes.Tewes@rub.de} }

\maketitle

\begin{abstract}
We consider the change-point problem for the marginal distribution of subordinated Gaussian processes that exhibit long-range dependence. The asymptotic distributions of Kolmogorov-Smirnov- and Cram\'{e}r-von Mises type statistics are investigated under local alternatives. By doing so we are able to compute the asymptotic relative efficiency of the mentioned tests and the CUSUM test. In the special case of a mean-shift in Gaussian data it is always $1$. Moreover our theory covers the scenario where the Hermite rank of the underlying process changes. 

In a small simulation study we show that the theoretical findings carry over to the finite sample performance of the tests..  \\\\
\noindent {\sf\textbf{Keywords:}} asymptotic relative efficiency, change-point test, empirical process, local alternatives, long-range dependence.
\end{abstract}

\section{Introduction}

Over the last two decades various authors have studied the change-point problem under long-range dependence and classical methods are often found to yield different results than under short-range dependence. The CUSUM test is studied in \citet{CsHo} and compared to the Wilcoxon change-point test  in \citet{DeRoTa}. \citet{Lin} investigates a Darling-Erd\H{o}s-type result for a parametric change-point test, and estimators for the time of change are considered in \citet{HoKo} and \citet{NiHaWyZh}. Moreover, the special features of long memory motivated new procedures. \citet{BeTe} and \citet{HoSh2} are testing for a change in the linear dependence structure of the time series and \citet{BeHoKoSh} and \citet{BaPi} construct tests in order to discriminate between stationary long memory observations and short memory sequences with a structural change. For a general overview of the change-point problem under long-range dependence see \citet{KoLe} and the associated chapter in \citet{BeFeGhKu}.

One of the classical change-point problems is the change of the marginal distributions of a time series $\{Y_i\}_{i \geq 1}$. When testing for at most one change-point (AMOC) in the marginal distribution one often considers the empirical distribution function of the first $k$ observations and that of the remaining observations. Taking a distance between the empirical distributions and the maximum over all $k<n$ yields a natural statistic. Common distances are the supremum norm, which gives the Kolmogorov-Smirnov statistic
\begin{align} 
T_n =  \max_{1 \leq k < n} \sup_{x \in \mathds R} \left \lvert \sum_{i=1}^k 1_{ \{ Y_{i} \leq x \} } - \frac{k}{n} \sum_{i=1}^n 1_{ \{ Y_{i} \leq x \} } \right \rvert,  \label{EinleitungKSStatistik}
\end{align}
or an $L^2$-distance, which gives the Cram\'{e}r-von Mises statistic
\begin{align} 
S_n =   \max_{1 \leq k < n}  \int_{x \in \mathds R} \left ( \sum_{i=1}^k 1_{ \{ Y_{i} \leq x \} } - \frac{k}{n} \sum_{i=1}^n 1_{ \{ Y_{i} \leq x \} } \right)^2 \ d \hat F_n(x). \label{EinleitungCvMStatistik}
\end{align}
Both are widely used for goodness-of-fit tests and two-sample problems. In the change-point literature they are considered by \citet{Szy} for independent data, by \citet{Ino} for strongly mixing sequences and by \citet{GiLeSu} for linear long-memory processes. However, note that in the LRD setting only the Kolmogorov-Smirnov test has been investigated. 

(\ref{EinleitungKSStatistik}) and (\ref{EinleitungCvMStatistik}) are functionals of the sequential empirical process, that is $\sum_{i=1}^{\lfloor nt \rfloor} (1_{\{ Y_i \leq x \}} - F(x))$ for $t \in [0,1]$ and $x \in \mathds R$. Thus the asymptotic distributions of $T_n$ and $S_n$ rely on that of the sequential empirical process. For weakly dependent sequences this would be a Gaussian process, in the special case of independent random variables it is called Kiefer-M\"{u}ller process. For stationary sequences that exhibit long-range dependence, \citet{DeTa} proved that the limit process is of the form $\{J(x)Z(t)\}_{t,x}$, where $J(x)$ is a deterministic function and the process is therefore called semi-degenerate. They considered subordinated Gaussian processes, in detail $Y_i = G(X_i)$ for any measurable function $G$ and a Gaussian sequence $X_i$ with non-summable autocovariance function. A similar limit structure was later obtained independently by \citet{HoHs} and \citet{GiKoSu} for long-range dependent moving-average sequences.

It is the main goal of this paper to derive the limit distribution of change-point statistics of the type (\ref{EinleitungKSStatistik}) and (\ref{EinleitungCvMStatistik}) under local alternatives. We then apply these results to derive the asymptotic relative efficiency (ARE) of several change-point tests. To this end we investigate the sequence
\begin{align}
G_1(X_1), \dots , G_1(X_{k^{\ast}}), G_n(X_{k^{\ast}+1},) \dots G_n(X_n), \label{IntorG}
\end{align}
Here $G_n$ is a sequence of functions such that the distribution of $G_n(X_1)$ converges to the distribution of $G(X_1)$ in some suitable way.  

Therefore, we are able to analyze various types of change-points, among them a mean-shift. Thus we may compute the ARE of Kolmogorov Smirnov, Cram\'{e}r-von Mises, CUSUM and Wilcoxon test and get the surprising result that in case of Gaussian data it is always $1$. \\
The mathematically most challenging case is the situation when the Hermite rank changes. The Hermite rank of the class $\{1_{ \{ G(\cdot) \leq x \}  } - F(x)\}_{x \in \mathds R}$ is defined as the smallest positive integer, such that $E [1_{\{ G(X_1) \leq x \} } H_q(X_1)] \not =0$ for some $x \in \mathds R$, with $H_q$ being the $q$-th Hermite polynomial. The structure of the limiting process $Z(t)$, e.g. the marginal distribution and the covariance structure, mainly depends on $m$. However, a special feature of distributional changes in subordinated Gaussian processes is the fact that the Hermite rank may change, too. Hence the question arises which Hermite process will determine the limit distribution. Under a mean-shift the Hermite rank remains unchanged, which can be seen easily by its definition. 

Our results differ in various ways from those obtained in \citet{GiLeSu}, where changes in the coefficients of an LRD linear process were investigated. While the empirical process of LDR moving average sequences converges to fractional Brownian motion, we may encounter higher order Hermite processes. The possible change in the Hermite rank is therefore a novel feature in our investigation. 

The rest of the paper is organized as follows. In section \ref{EmpProcessChangePointAlt} we will state a limit theorem for the sequential empirical process under change-point alternatives. Moreover we will give the asymptotic distribution of the test statistics under the hypothesis of no change as well as under local alternatives. Thus we are able to derive the asymptotic relative efficiency of several change-point tests. In section \ref{LimitTheoremsArrays} we consider the empirical process for long-range dependent arrays that are stationary within rows. The outcome mainly serves as a device for proving the main results, but is also of interest on its own. Section \ref{SimStudy} contains the simulation study. To the best of our knowledge there are no results on the finite sample performance of the Cram\'{e}r-von Mises change-point test under long memory. It is compared to other change-point tests and the effect of an estimated Hurst-coefficient is discussed. We obtain that the theoretical results (e.g. asymptotic relativ efficiency between Cram\'{e}r-von Mises and CUSUM test) carry over to the finite sample performance of the tests. Finally proofs are provided in section \ref{SectionProofs}.

\section{Main results} \label{EmpProcessChangePointAlt}

Let $ \{X_i\}_{i \geq 1}$ be a stationary Gaussian process, with
\begin{align*}
E X_i =0, \ \  EX_i^2 =1 \ \ \text{and} \ \ \rho (k) = EX_0 X_k = k^{-D} L(k),
\end{align*}
for $0 < D <1$ and a slowly varying function $L$. The non-summability of the covariance function is one possibility to define long-range dependence. We investigate our results for so called subordinated Gaussian processes $\{Y_i \}_{i \geq 1}$, where $Y_i = G(X_i)$ and $G \colon \mathds R \to \mathds R$ is a measurable function. The key tool in our analysis of possible changes in the marginal distribution of such a process is the sequential empirical process. To obtain weak convergence of this process the right normalization is given by $d_{n,m}$, defined by
\begin{align}
d_{n,m}^2 = Var \left ( \sum_{i=1}^n H_m(X_i) \right )  \sim n^{2H} L^{m}(n), \label{DefinitionNormalizationBlau}
\end{align}
where the constant of proportionality is $2m!(1-mD)^{-1}(2-mD)^{-1}$, see Theorem 3.1 in \citet{Taq}. $H = 1-mD/2$ is called Hurst coefficient and 
\begin{align*}
m =  \min \left \{ q >0  \ \vert  \ E [1_{\{ G(X_1) \leq x \} } H_q(X_1)] \not =0 \text{ for some } x \right \},
\end{align*}
is the Hermite rank of $\{1_{ \{ G(\cdot) \leq x \}  } - F(x)\}_{x \in \mathds R}$. The mentioned result of \citet{DeTa} then reads as follows.

\begin{The*} [Dehling, Taqqu] \label{SeqEmpProHypo}
Let the class of functions $\{1_{ \{ G(\cdot) \leq x \}  } - F(x)\}_{x \in \mathds R}$ have Hermite rank m and let $0 < D < 1/m$. Then
\begin{align}
\frac 1 {d_{n,m}} \sum_{i=1}^{\lfloor nt \rfloor} (1_{\{ G(X_i) \leq x \}} - F(x)) \xrightarrow{\mathcal D} \frac {J_m(x)} {m !} Z_{m,H}(t) \label{SeqKonvHypo}
\end{align}
where the convergence takes place in $D([0,1] \times [- \infty,\infty])$, equipped with the uniform topology.  $J_m(x)$ is defined by
\begin{align*}
J_m(x) = E[1_{\{ G(X_1) \leq x \} } H_m(X_1)] 
\end{align*}
 and $(Z_{m,H}(t))_{t \in [0,1]}$ is an $m$-th order Hermite process, see \citet{Taq3} for a definition.
\end{The*}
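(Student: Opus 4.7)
The plan is to reduce the convergence to Taqqu's non-central limit theorem for $\sum_{i=1}^{\lfloor nt\rfloor} H_m(X_i)$ via the Hermite expansion of the indicator functions, and then to show that the uniformity in $x$ (and jointly in $t$) can be upgraded to convergence in $D([0,1]\times[-\infty,\infty])$ under the uniform topology.

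First I would expand each centered indicator as a Hermite series:
\begin{equation*}
1_{\{G(X_i)\leq x\}} - F(x) \;=\; \sum_{q\geq m} \frac{J_q(x)}{q!}\, H_q(X_i),
\end{equation*}
with $J_q(x) = E[1_{\{G(X_1)\leq x\}}H_q(X_1)]$. Isolate the leading term
\begin{equation*}
\frac{J_m(x)}{m!}\, \frac{1}{d_{n,m}}\sum_{i=1}^{\lfloor nt\rfloor} H_m(X_i).
\end{equation*}
By the non-central limit theorem of Taqqu, the normalized Hermite partial sums converge, as processes in $t\in[0,1]$, to the $m$-th order Hermite process $Z_{m,H}(t)$. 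Since $J_m(x)/m!$ is a deterministic, bounded and (in $x$) continuous function, multiplying gives joint convergence of the leading term to $(J_m(x)/m!)\, Z_{m,H}(t)$ in $D([0,1]\times[-\infty,\infty])$ equipped with the uniform topology; the uniformity in $x$ of the deterministic factor is trivial, and tightness in $t$ follows from the Hermite-process limit.

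The main step — and where I expect the real work to lie — is showing that the remainder
\begin{equation*}
R_n(t,x) \;=\; \frac{1}{d_{n,m}}\sum_{q>m}\frac{J_q(x)}{q!}\sum_{i=1}^{\lfloor nt\rfloor} H_q(X_i)
\end{equation*}
is asymptotically negligible uniformly in $(t,x)$. By orthogonality of Hermite polynomials,
\begin{equation*}
\operatorname{Var}\!\left(\sum_{i=1}^{n} \big(1_{\{G(X_i)\leq x\}}-F(x)\big)\right) = \sum_{q\geq m}\frac{J_q(x)^2}{q!^2}\,\operatorname{Var}\!\left(\sum_{i=1}^{n} H_q(X_i)\right),
\end{equation*}
and by Parseval $\sum_{q\geq m} J_q(x)^2/q! \leq F(x)(1-F(x))\leq 1/4$ uniformly in $x$. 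Combining this with the asymptotic $\operatorname{Var}(\sum_i H_q(X_i))\sim c_q\, n^{2-qD}L^q(n)$ when $qD<1$ and $O(n)$ when $qD\geq 1$, one obtains $E R_n(t,x)^2 = o(1)$ uniformly in $(t,x)$ because each ratio $d_{n,q}/d_{n,m}\to 0$ for $q>m$ given $D<1/m$. To upgrade to uniform smallness in $x$, I would exploit the monotonicity of $x\mapsto 1_{\{G(X_i)\leq x\}}$: discretize $x$ at a grid $x_1<\dots<x_N$ with $F(x_{j+1})-F(x_j)\leq \varepsilon$, bound the maximal fluctuation between grid points using monotonicity (the standard Glivenko–Cantelli trick), and control the finitely many grid values by the $L^2$-bound above together with a union bound.

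Finally, to conclude convergence of the full process, I would assemble tightness in the uniform topology on $D([0,1]\times[-\infty,\infty])$ from: (i) the Hermite-process tightness in $t$ for the leading term, (ii) the equicontinuity in $x$ supplied by the monotonicity/chaining argument for the remainder, and (iii) convergence of finite-dimensional distributions, which reduces to the joint finite-dimensional convergence of $d_{n,m}^{-1}\sum_{i=1}^{\lfloor nt\rfloor}H_m(X_i)$ (again by Taqqu) combined with the vanishing of $R_n(t,x)$. A continuous mapping / Slutsky argument then delivers the stated limit $(J_m(x)/m!)\,Z_{m,H}(t)$. The principal obstacle is the uniform-in-$x$ control of the higher-order Hermite components: it is not enough to have variance bounds pointwise, and the crucial input is the combination of Parseval's bound on the Hermite coefficients with the monotonicity of the indicator class.
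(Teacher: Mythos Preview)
This theorem is not proved in the paper: it is quoted from \citet{DeTa} as background (``The mentioned result of \citet{DeTa} then reads as follows''), so there is no proof in the paper to compare against. The paper does, however, reprove the underlying \emph{uniform reduction principle} as the special case $G_n\equiv G$, $m(n)\equiv m$ of its Theorem~\ref{WeakRedPrinArray}, and that proof is the relevant benchmark.

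Your overall architecture --- Hermite expansion, Taqqu's non-central limit theorem for the leading term, uniform negligibility of the remainder --- is correct and matches Dehling--Taqqu. But the step where you ``upgrade to uniform smallness in $x$'' has a real gap. The Glivenko--Cantelli trick you invoke relies on monotonicity of the process in $x$; the full empirical process $x\mapsto \sum_i 1_{\{G(X_i)\leq x\}}$ is monotone, but the \emph{remainder} $R_n(t,x)$ is not, because you have subtracted $J_m(x)\sum_i H_m(X_i)$, and $J_m$ is not monotone (it is merely of bounded variation; it is also not continuous in general, contrary to what you wrote). So bounding the fluctuation of $R_n$ between grid points by the fluctuation at the endpoints fails. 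What Dehling--Taqqu (and the paper's Lemmas~\ref{BeweisStrukturDefinierendesLemma}--\ref{ResttermChaining}) actually do is chain with respect to a dominating increasing function $\Lambda(x)=\int_{\{G(s)\leq x\}}\sum_{q\leq m}|H_q(s)|/q!\,\phi(s)\,ds$, which simultaneously controls the increments of \emph{all} Hermite coefficients $J_q(x,y)$ and of $F(x,y)$; the chaining then combines a moment bound of the form $E|S_n(k;x,y)|^2\leq C(k/n)n^{-\gamma}F(x,y)$ on dyadic intervals with a separate probabilistic bound on the finest-scale residual involving $\sum_j H_q(X_j)$. Your $L^2$ bound plus a naive grid/union bound would not give the required uniform rate, since the number of grid points needed grows with $n$ and the union bound alone does not beat the variance decay without the dyadic summation structure.
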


\begin{Bem} In the case $m=1$, the Hermite process becomes the well known fractional Brownian Motion, which we denote by $B_H(t)$.
\end{Bem}

\subsection{The empirical process under change-point alternatives}

Let us consider the following change-point model. Define the triangular array
\begin{align}
Y_{n,i} =  \begin{cases} 
                             G(X_i),  & \text{if }i \leq \lfloor n \tau \rfloor ,\\ 
                          G_n(X_i), & \text{if } i \geq \lfloor n \tau \rfloor +1,
                                                      \end{cases} \label{ArraymitChange} 
\end{align}
for measurable functions $G$ and $(G_n)_n$ and unknown $\tau \in (0,1)$. For $\tau =0$ one gets a row-wise stationary triangular array, as considered in section \ref{LimitTheoremsArrays}, and for $\tau =1$ a stationary sequence, as in \citet{DeTa}. In what follows we will denote the distribution functions of $G(X_i)$ and $G_n(X_i)$ by $F$ and $F_{(n)}$, respectively. \\
To obtain weak convergence of the empirical process of (\ref{ArraymitChange}) we have to make some assumptions on the structure of the change and the Hermite rank.
\\\\
\textbf{Assumption A:} 
\begin{enumerate}
\item [A1.]The class of functions $\{1_{ \{ G(\cdot) \leq x \}  }\}_{x \in \mathds R}$ has Hermite rank $m$ with $0 < D < 1/m$.
\item [A2.] Let $m(n)$ be the Hermite rank of $\{1_{ \{ G_n(\cdot) \leq x \}  }\}_{x \in \mathds R}$ and $m^{\ast}= \liminf_{n \to \infty} m(n)$. Then we assume
\begin{align*}
n^{(m-m^{\ast})D(1+\delta)/2} \sup_{x \in \mathds R} ( P ( \min\{G(X_1), G_n(X_1) \} \leq x ) -P ( \max\{G(X_1), G_n(X_1) \} \leq x )) \rightarrow 0,
\end{align*}
 for some $\delta >0$.

\end{enumerate}

\begin{The} \label{ConvChangeAltA}
If Assumption A holds, then
\begin{align*}
\frac 1 {d_{n,m}} \sum_{i=1}^{\lfloor nt \rfloor} (1_{\{ Y_{n,i} \leq x \}} - P(Y_{n,i} \leq x)) \xrightarrow{\mathcal D} \frac {J_m(x)} {m!} Z_{m,H}(t),
\end{align*}
where $J_m(x)$ is the Hermite coefficient of $1_{\{G() \leq x\}}$. The convergence takes place in $D([0,1] \times [- \infty,\infty])$, equipped with the uniform topology.
\end{The}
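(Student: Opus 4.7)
The plan is to reduce the claim to the Dehling--Taqqu theorem (Theorem A) by treating the post-change segment as a small perturbation of the pre-change one. Introduce the residual
\begin{align*}
D_{n,i}(x) := \bigl(1_{\{G_n(X_i) \leq x\}} - 1_{\{G(X_i) \leq x\}}\bigr) - \bigl(F_{(n)}(x) - F(x)\bigr)
\end{align*}
and decompose
\begin{align*}
\sum_{i=1}^{\lfloor nt \rfloor} \bigl(1_{\{Y_{n,i} \leq x\}} - P(Y_{n,i} \leq x)\bigr) = \sum_{i=1}^{\lfloor nt \rfloor} \bigl(1_{\{G(X_i) \leq x\}} - F(x)\bigr) + \sum_{i=\lfloor n\tau\rfloor + 1}^{\lfloor nt\rfloor} D_{n,i}(x),
\end{align*}
with the second sum taken to be zero when $t \leq \tau$. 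After normalising by $d_{n,m}$, the first sum converges to $J_m(x) Z_{m,H}(t)/m!$ in $D([0,1] \times [-\infty,\infty])$ directly by Theorem A. The whole task therefore reduces to proving
\begin{align*}
\sup_{(t,x) \in [0,1] \times \mathds{R}} \frac{1}{d_{n,m}} \Bigl|\sum_{i=\lfloor n\tau\rfloor + 1}^{\lfloor nt\rfloor} D_{n,i}(x)\Bigr| \xrightarrow{P} 0.
\end{align*}

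I would analyse the residual through its Hermite expansion $D_{n,i}(x) = \sum_{q \geq 1} \frac{J_q^{(n)}(x) - J_q(x)}{q!} H_q(X_i)$, where $J_q^{(n)}(x) := E[1_{\{G_n(X_1) \leq x\}} H_q(X_1)]$. Since $J_q(x) = 0$ for $q < m$ and (for $n$ large) $J_q^{(n)}(x) = 0$ for $q < m^{\ast}$, the sum effectively starts at $q = m^{\ast} \wedge m$. The terms with $q \geq m$ are the easy ones: there the partial sum of $H_q(X_i)$ has variance of order at most $d_{n,m}^2$, and the pointwise inequality $|J_q^{(n)}(x) - J_q(x)| \leq E[|H_q(X_1)|\,|1_{\{G_n(X_1) \leq x\}} - 1_{\{G(X_1) \leq x\}}|]$ combined with the vanishing of the supremum in Assumption A2 allows one to sum variance contributions in the spirit of the row-stationary array arguments of Section \ref{LimitTheoremsArrays}.

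The main obstacle will be the low-rank regime $m^{\ast} < m$. For $m^{\ast} \leq q < m$ there is no cancellation: $J_q(x) = 0$ while $J_q^{(n)}(x) \neq 0$, and the partial sum of $H_q(X_i)$ has variance of order $d_{n,q}^2$, which exceeds $d_{n,m}^2$ by a factor $n^{(m-q)D}$ up to a slowly varying term. To kill this growth uniformly in $x$ one needs $\sup_x |J_q^{(n)}(x)| = o(n^{-(m-q)D/2})$, and this is precisely the content of Assumption A2. The pointwise bound
\begin{align*}
|J_q^{(n)}(x)| \leq E\bigl[|H_q(X_1)|\bigl(1_{\{G(X_1) \leq x < G_n(X_1)\}} + 1_{\{G_n(X_1) \leq x < G(X_1)\}}\bigr)\bigr]
\end{align*}
combined with a H\"older inequality and the polynomial rate $n^{-(m-m^{\ast})D(1+\delta)/2}$ furnishes exactly this margin for all $q \in \{m^{\ast}, \ldots, m-1\}$ simultaneously, with the $\delta > 0$ absorbing an appropriate $L^p$ norm of $H_q(X_1)$. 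Once the residual has been shown to vanish uniformly in $(t,x)$, tightness of the empirical process is inherited from the monotonicity of the indicators in $x$ together with the Dehling--Taqqu tightness for the main term, yielding the claimed weak convergence in $D([0,1] \times [-\infty,\infty])$ under the uniform topology.
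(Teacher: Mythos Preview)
Your strategy is correct and ultimately uses the same two ingredients as the paper's proof: (i) a reduction principle for arrays to control the Hermite tail $q>m$ uniformly in $x$, and (ii) the H\"older bound (the paper's Lemma~\ref{KonvergenzHermiteKoeffizientFunktionen}) showing $\sup_x d_{n,m^\ast}/d_{n,m}\,|J_q^{(n)}(x)-J_q(x)|\to 0$ for $q\le m$. The difference is only organisational. You split the process into the stationary $G$-part plus a residual and then analyse the residual's Hermite expansion; the paper instead first applies a joint reduction principle (its Lemma~\ref{RedPrinZusammengesetzt}, which is just Theorem~\ref{WeakRedPrinArray} applied once with $G$ and once with $G_n$, then glued at $\lfloor n\tau\rfloor$) to reduce the whole empirical process to $\sum_{q=m^\ast}^m J_{q,n,i}(x)/q!\,H_q(X_i)$, and only then splits off the $J_m$-term. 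Your decomposition has the mild advantage that Theorem~A is invoked directly for the main term; the paper's has the advantage that the reduction step is done once and cleanly, without carrying an infinite Hermite series.

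One point to tighten: your description of the range $q\ge m$ as ``the easy ones'' handled by ``summing variance contributions'' is imprecise. A term-by-term variance bound over the infinite tail $q>m$ gives pointwise control in $x$ but not uniformity in $x$; the latter requires chaining, which is exactly what Theorem~\ref{WeakRedPrinArray} provides. The clean way to execute your idea is to observe that
\[
\sum_{q>m}\frac{J_q^{(n)}(x)-J_q(x)}{q!}H_q(X_i)
=\Bigl(1_{\{G_n(X_i)\le x\}}-\sum_{q\le m}\tfrac{J_q^{(n)}(x)}{q!}H_q(X_i)\Bigr)
-\Bigl(1_{\{G(X_i)\le x\}}-\sum_{q\le m}\tfrac{J_q(x)}{q!}H_q(X_i)\Bigr),
\]
so the tail of your residual is the difference of two reduction-principle remainders, each $o_P(1)$ uniformly in $(t,x)$ by Theorem~\ref{WeakRedPrinArray} (resp.\ Theorem~A). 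This is precisely what the paper packages as Lemma~\ref{RedPrinZusammengesetzt}. With that clarification your argument goes through and is equivalent to the paper's.
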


\begin{Bem} \label{ErklaerungAss3}
(i) For given functions $G(x)$ and $G_n(x)$, Assumption A2 might easily being checked, see the examples below. It serves to ensure convergence of the Hermite coefficients $J_{q,n}(x) = E[1_{\{G_n(X_i) \leq x \}}H_q(X_i)]$. In detail,
\begin{align*}
\sup_{x \in \mathds R} ( P ( \min\{G(X_1), G_n(X_1) \} \leq x ) -P ( \max\{G(X_1), G_n(X_1) \} \leq x )) \rightarrow 0
\end{align*}
implies, see the proof of Lemma \ref{KonvergenzHermiteKoeffizientFunktionen},
\begin{align}
\sup_{x \in \mathds R} \lvert J_{q,n}(x) - J_q(x) \rvert \rightarrow 0 \ \ \ \ \forall q \in \mathds N. \label{KonvergenzHermiteFunktionRemark}
\end{align}
By Assumption A1, $J_1(x) = \dots J_{m-1}(x) =0$ for all $x \in \mathds R$, yet $J_m(x) \not =0$ for some $x$. Together with (\ref{KonvergenzHermiteFunktionRemark}) this implies  $m^{\ast}= \liminf_{n \to \infty} m(n) \leq m$.

(ii) Moreover, A2 implies convergence of the marginal distribution function. To see this, note
\begin{align*}
\lvert F_{(n)}(x) - F(x) \rvert =& \ \max \{F_{(n)}(x), F(x) \} -  \min  \{F_{(n)}(x), F(x) \} \\
\leq & \ P (\min \{G(X),G_n(X) \} \leq x) -P(\max \{ G(X),G_n(X) \} \leq x )
\end{align*}
and $n^{(m-m^{\ast})D(1+\delta)/2} = O(1)$. However, the converse is not always true. Consider for instance the functions $G(x)=x$ and $G_n(x) = G_1(x) = -x$ or the situation in Example \ref{BeispielKomplizierterLimes}. Then again, there are lots of natural choices of $G$ and $G_n$ for whom convergence of the marginal distribution functions (with a certain rate) implies Assumption A2. Among them $G_n(x) = G(x) + \mu_n$ (mean-shift), $G_n(x) = \sigma_n G(x)$ (change in variance) and 
\begin{align*}
G_n(x) = F_{(n)}^{-1} \circ \Phi (x) \ \ \ \ \text{and} \ \ \ \ G(x) = F^{-1} \circ \Phi (x).
\end{align*}
(iii) Our assumptions explicitly allow for the Hermite rank to change together with the marginal distribution. Then again, the limit behaivior seems to be untouched by this change. Intuitively this corresponds to the idea that the change in distribution and the change in the Hermite coefficient, both caused by the difference of $G$ and $G_n$, are of the same order. For $q<m$ this enforces the function $J_{q,n}(x)$ to converge rather fast to $0$.
Technically this can be explained through A2. If this assumption is dropped, we might actually encounter limits with multiple Hermite processes. Such cases will be considered in Example \ref{BeispielKomplizierterLimes} and Corollary \ref{KonvEmpProArray}.

(iv) If A1 is violated, the sequence $\{G(X_i)\}_{i\geq 1}$ is actually short-range dependent. For stationary observations \citet{CsMi} showed convergence of the sequential empirical process to a two-parameter Gaussian process. Change-point alternatives have not been considered for such random variables, yet, but would require fundamentally different proofs compared to our results.
\end{Bem}

\subsection{Asymptotic behavior of the change-point statistics}

We now apply the results concerning empirical processes to determine the asymptotic distribution of the Kolmogorov-Smirnov statistics
\begin{align}
T_{n} = \sup_{t \in [0,1]} \sup_{x \in \mathds R} d_{n,m}^{-1} \left \lvert \sum_{i=1}^{ \lfloor nt \rfloor } 1_{ \{ Y_{n,i} \leq x \} } - \frac{\lfloor nt \rfloor}{n} \sum_{i=1}^n 1_{ \{ Y_{n,i} \leq x \} } \right \rvert  \label{NonWeightedTeststatistic}
\end{align}
and that of the Cram\'{e}r-von Mises change-point statistic
\begin{align}
S_n =d_{n,m}^{-2} \sup_{t \in [0,1]} \int_{\mathds R} \left \lvert \sum_{i=1}^{ \lfloor nt \rfloor } 1_{ \{ Y_{n,i} \leq x \} } - \frac{\lfloor nt \rfloor}{n} \sum_{i=1}^n 1_{ \{ Y_{n,i} \leq x \} } \right \rvert ^2 \ d \hat F_n(x). \label{CvMTeststatistic}
\end{align}
To get a non degenerate limit under a sequence of local alternatives it is important to choose the right amount of change. For a mean-shift this is naturally the difference of the expectations before and after the change. For a general change we formulate the test problem as follows: We wish to test the hypothesis
\begin{align*}
\mathbf{H}: \ \ \text{Assumption A1 holds and } G_n(x) = G(x) \text{ for all } x \in \mathds R \text{ and } n \geq 1,
\end{align*}
against the sequence of local alternatives
\begin{align}
\nonumber \mathbf{A_n}: \ \ & \text{Assumption A holds and, for } n \to \infty, \\
& \frac{n}{d_{n,m}}  (F(x) - F_{(n)} (x)) \rightarrow g(x), \label{LocalAlternativeConvergence} \\
\nonumber & \text{uniformly in } x, \text{ where } g(x) \text{ is a measurable function of bounded} \\
\nonumber & \text{total variation, whose support has positive Lebesgue measure}.
\end{align}

\begin{Bem} 
Note that $n d_{n,m}^{-1} \sim n^{mD/2} L^{-m/2}(n)$. Thus (\ref{LocalAlternativeConvergence}) implies
\begin{align*}
n^{(m-m^{\ast})D(1+\delta)/2} (F(x) - F_{(n)} (x)) \rightarrow 0,
\end{align*}
for $\delta < m^{\ast}/ (m-m^{\ast})$ or $m^{\ast} =m$. This again implies Assumption A2 for certain choices of functions  $G$ and $G_n$, see Remark \ref{ErklaerungAss3} (ii).
\end{Bem}

\begin{The} \label{KorAsVertCPStat} (i) Under the hypothesis $\mathbf{H}$ of no change we have, as $n \to \infty$,
\begin{align*}
& \ T_n \xrightarrow{\mathcal{D}} \sup_{x \in \mathds R} \lvert J_m(x)/(m!) \rvert \sup_{t \in [0,1]} \left \lvert \tilde Z_{m,H}(t) \right \rvert \\
\text{and} \ \ & \ S_n \xrightarrow{\mathcal{D}}  \int_{x \in \mathds R} (J_m(x)/(m!))^2 \ d F(x)  \sup_{t \in [0,1]} \left \lvert \tilde Z_{m,H}(t) \right \rvert^2,
\end{align*}
where $\tilde Z_{m,H}(t) =  Z_{m,H}(t) - t  Z_{m,H}(1)$. \\
(ii) Under the sequence of local alternatives $\mathbf{A_n}$ we have, as $ n \to \infty$,
\begin{align*}
& \ T_n \xrightarrow{\mathcal{D}} \sup_{x \in \mathds R} \sup_{t \in [0,1]} \left \lvert J_m(x)/(m!) \tilde Z_{m,H}(t)- g(x)\psi_{\tau}(t) \right \rvert \\ 
\text{and} \ \ & \ S_n \xrightarrow{\mathcal{D}} \sup_{t \in [0,1]}  \int_{x \in \mathds R} \left ( J_m(x)/(m!) \tilde Z_{m,H}(t)- g(x)\psi_{\tau}(t) \right )^2 \ d F(x), 
\end{align*}
where 
\vspace{-10pt}
\begin{align*}
\psi_{\tau} (t) = \begin{cases} 
                             t ( 1- \tau ),  & \text{if }t \leq   \tau  ,\\
                          \tau ( 1-  t ), & \text{if } t > \tau.
                                                      \end{cases}
\end{align*}
\end{The}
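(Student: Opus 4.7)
The approach is a standard change-point decomposition into a centered stochastic part and a deterministic drift, finishing with the continuous mapping theorem, with Theorem~\ref{ConvChangeAltA} providing the convergence of the stochastic part.

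I would start by writing
\[
\sum_{i=1}^{\lfloor nt\rfloor} 1_{\{Y_{n,i}\leq x\}} - \frac{\lfloor nt\rfloor}{n}\sum_{i=1}^n 1_{\{Y_{n,i}\leq x\}} = R_n(t,x) + D_n(t,x),
\]
where $R_n(t,x)$ is the same quantity with each indicator replaced by its centered version $1_{\{Y_{n,i}\leq x\}} - P(Y_{n,i}\leq x)$, and $D_n(t,x)=\sum_{i=1}^{\lfloor nt\rfloor}P(Y_{n,i}\le x)-\frac{\lfloor nt\rfloor}{n}\sum_{i=1}^n P(Y_{n,i}\le x)$. Because $P(Y_{n,i}\le x)$ equals $F(x)$ for $i\le\lfloor n\tau\rfloor$ and $F_{(n)}(x)$ otherwise, splitting into the cases $t\le\tau$ and $t>\tau$ yields
\[
D_n(t,x) = \bigl(F(x)-F_{(n)}(x)\bigr)\cdot\frac{1}{n}\times\begin{cases}\lfloor nt\rfloor(n-\lfloor n\tau\rfloor), & t\le\tau,\\ \lfloor n\tau\rfloor(n-\lfloor nt\rfloor), & t>\tau.\end{cases}
\]
Under $\mathbf{H}$ this vanishes identically. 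Under $\mathbf{A_n}$, the bracketed prefactor divided by $n$ converges to $\psi_\tau(t)$ uniformly in $t$, while $n(F-F_{(n)})/d_{n,m}\to g$ uniformly in $x$ by assumption; hence $d_{n,m}^{-1}D_n(t,x)\to g(x)\psi_\tau(t)$ uniformly in $(t,x)\in[0,1]\times\mathds R$ (the sign in the theorem statement being a matter of how $g$ is signed).

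For the stochastic part, Theorem~\ref{ConvChangeAltA} gives
\[
d_{n,m}^{-1}\sum_{i=1}^{\lfloor nt\rfloor}\bigl(1_{\{Y_{n,i}\leq x\}}-P(Y_{n,i}\leq x)\bigr)\xrightarrow{\mathcal D}\frac{J_m(x)}{m!}Z_{m,H}(t)
\]
in $D([0,1]\times[-\infty,\infty])$ with the uniform topology. Applying the continuous functional $f\mapsto\{(t,x)\mapsto f(t,x)-t\,f(1,x)\}$ (and noting that replacing $t$ by $\lfloor nt\rfloor/n$ in this functional produces a uniformly negligible error) yields $d_{n,m}^{-1}R_n(t,x)\xrightarrow{\mathcal D}\tfrac{J_m(x)}{m!}\tilde Z_{m,H}(t)$ in the same topology.

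Putting the two parts together, $d_{n,m}^{-1}(R_n+D_n)$ converges to $\tfrac{J_m(x)}{m!}\tilde Z_{m,H}(t)-g(x)\psi_\tau(t)$ in the uniform topology on $D([0,1]\times[-\infty,\infty])$; setting $g\equiv 0$ gives part (i). For $T_n$, one applies the continuous functional $f\mapsto\sup_{t,x}|f(t,x)|$. For $S_n$, one combines this with $\|\hat F_n-F\|_\infty\to 0$ almost surely (which holds under $\mathbf{A_n}$ too, since $F_{(n)}\to F$ uniformly by Remark~\ref{ErklaerungAss3}(ii)) and applies the functional $(f,H)\mapsto\sup_t\int f(t,x)^2\,dH(x)$. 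The main technical point I would spend effort on is establishing continuity of this last functional at the relevant limit: it requires the limiting integrand to be bounded uniformly in $(t,x)$, which follows from $|J_m|\le 1$ (being a Hermite coefficient of an indicator), from $g$ being of bounded total variation and hence bounded, and from continuity of the sample paths of $\tilde Z_{m,H}$ on $[0,1]$; a dominated convergence argument then closes the gap. Everything else is routine.
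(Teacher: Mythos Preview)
Your overall strategy---split the bridge process into a centered stochastic piece handled by Theorem~\ref{ConvChangeAltA} plus a deterministic drift $\psi_\tau(t)(F-F_{(n)})$, then apply continuous mapping---is exactly the paper's route, and your treatment of $T_n$ is correct and matches the paper.

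For $S_n$, however, your sketch has a real gap. You propose to show continuity of $(f,H)\mapsto\sup_t\int f(t,x)^2\,dH(x)$ at the limit using only boundedness of the integrand and a ``dominated convergence argument.'' Boundedness is not enough: the problematic term is $\int J_m(x)^2\,d(\hat F_n-F)(x)$ (and the cross term with $g$), and uniform convergence $\|\hat F_n-F\|_\infty\to 0$ does \emph{not} by itself force $\int h\,d(\hat F_n-F)\to 0$ for merely bounded $h$. What the paper actually uses is that $J_m$ is of \emph{bounded variation}---a fact it proves directly from the definition $J_m(x)=E[1_{\{G(X_1)\le x\}}H_m(X_1)]$ by bounding $\sum_i|J_m(x_{i+1})-J_m(x_i)|\le E|H_m(X_1)|$---and then integrates by parts to turn the integral into $-\int(\hat F_n-F)\,dJ_m^2$, which is controlled by $\|\hat F_n-F\|_\infty$ times the total variation of $J_m^2$. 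The same device handles the $g^2$ and $J_m g$ terms (you do note $g$ has bounded variation, but you never invoke it for $J_m$). The paper also passes through the Skorohod--Dudley--Wichura representation to upgrade the weak convergence of the bridge process to almost-sure uniform convergence before doing this calculation, which is what makes the remaining estimates deterministic. Your ``dominated convergence'' phrasing does not capture either of these steps. (A minor side point: $|J_m|\le 1$ is not correct in general; one has $|J_m(x)|\le\sqrt{m!\,F(x)(1-F(x))}$, which is bounded but may exceed $1$ for $m\ge 3$. This does not affect the argument, since only boundedness is needed.)
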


Motivated by this Theorem we consider change-point tests based on the statistics $T_n$ and $S_n$. Critical values might be chosen as
 \begin{align*}
\sup_{x \in \mathds R} \lvert J_m(x)/(m!) \rvert q_{1-\alpha,m,H} \ \ \ \ \text{and} \ \ \ \ \int_{x \in \mathds R} (J_m(x)/(m!))^2 \ d F(x) q_{1-\alpha,m,H}^2,
\end{align*}
for the Kolmogorov-Smirnov test and the Cram\'{e}r-von Mises test, respectively. Here $q_{1-\alpha,m,h}$ is the $(1-\alpha)$-quantile of $\sup_{t \in [0,1]} \lvert \tilde Z_{m,H}(t) \rvert$. Thereby the tests have asymptotically level $\alpha$ and nontrivial power against local alternatives. 

The tests can be performed, if the right normalization for the empirical process, the supremum of $J_m(x)$ and the distribution of $ \sup_{ t \in [0,1]} \rvert \tilde Z_{m,H}(t) \rvert$ are known. In practical applications this might be not the case. Solutions are self-normalization (\citet{Sha}), estimating the the Hurst-coefficient (see for example \citet{Kuen}) and bootstrap estimators for $J_m(x)$ (\citet{Tew2}).

\subsection{Examples}

\begin{Bei} [Mean-shift] \label{MeanShift}
Let $G_n(x) = G(x)+ \mu_n$ with $ \mu_n \sim d_n/n$, then we get the typical change in the mean problem. In the case of long-range dependent subordinated Gaussian processes this was considered in \citet{DeRoTa,DeRoTa2}, \citet{CsHo}, \citet{Sha} and \citet{Bet}. Let $f_G$ be the probability density of $G(X_1)$, and assume that it is continuous and of bounded variation. Then we obtain
\begin{align*}
\frac n {d_{n,m}} (F(x) - F_{(n)}(x) ) = \frac n {d_{n,m}} (F(x) - F(x- \mu_n) ) \rightarrow  C f_G(x),
\end{align*}
where, due to continuity of $f_G$, the convergence holds uniformly.
\end{Bei}

\begin{Bei} [Change in the variance] \label{BeispielChangeVarianz}
To describe the change-in-variance-problem define $G_n(x) = 1/(1- \delta_n) G(x)$, with $\delta_n \sim d_n/n$. For ease of notation let $\delta_n = d_n/n$. Then we get
\begin{align}
\nonumber & \ \sup_{x \in \mathds R} \left \lvert \delta_n^{-1} (F(x) - F_{(n)} (x) ) - x f_G(x) \right \rvert \\
\nonumber = & \ \sup_{x \in \mathds R} \left \lvert \delta_n^{-1} (F(x) - F (x- \delta_n x ) ) - x f_G(x) \right \rvert \\
\nonumber = & \ \sup_{x \in \mathds R} \left \lvert \frac  { x F(x) - (x - \delta_n x) F(x- \delta_n x ) } {\delta_n x}  - F(x- \delta_n x ) - x f_G(x) \right \rvert \\
\leq & \ \sup_{x \in \mathds R} \left \lvert \frac  { x F(x) - (x - \delta_n x) F(x- \delta_n x ) } {\delta_n x} - (x f_G(x) + F(x)) \right \rvert \label{changevariance1} \\
& \ +  \sup_{x \in \mathds R} \lvert F(x - \delta_n x) - F(x) \rvert.  \label{changevariance2} 
\end{align}
The derivative of $xF(x)$ is $xf_G(x) + F(x)$, hence (\ref{changevariance1}) converges to $0$. The convergence is uniform, if $f_G$ and $F$ are continuous. (\ref{changevariance2}) converges to $0$, because of continuity, monotonicity and boundedness of $F$. Thus (\ref{LocalAlternativeConvergence}) holds with function $g (x) = x f_G(x).$  Assume without loss of generality $\sigma_n= 1/ (1- \delta_n)>1$, then
\begin{align*}
& \ P\max\{G(X_1), G_n(X_1) \} \leq x ) \\
= & \ P (\sigma_n G(X_1) \leq, G(x) \geq 0) + P( G(X_1) \leq x, G(X_1) \leq 0) \\
= & \ \begin{cases} 
                            F(x/ \sigma_n),  & \text{if } x \geq 0 ,\\
                         F(x), & \text{if } x<0.   
                                                      \end{cases}
\end{align*}
The minimum can be treated analogously, hence Assumption A2 follows from convergence of the marginals.

Additionally one might consider a combined change in mean and variance, given through $G_n(x) =  \sigma_n G(x) + \mu_n$. In this case (\ref{LocalAlternativeConvergence}) holds with $g(x) = f_G(x) (C_1+C_2x)$.
\end{Bei}

\begin{Bei} [Generalized inverse of a mixture distribution] \label{BspMixtureDistribution}
By using the generalized inverse of a distribution function one could generate subordinated Gaussian processes with any given marginals, see for example \citet{DeRoTa2}. We use this for the change-point problem by setting \begin{align*}
& \ G \equiv F^{-1} \circ \Phi \ \ \ \ \text{and} \ \ \ \ G_n \equiv F_{(n)}^{-1} \circ \Phi.
\end{align*}
For a continuous distribution function $F^{\ast}$ define the mixture
\begin{align*}
F_{(n)} (x) = (1- \delta_n) F(x) + \delta_n F^{\ast}(x),
\end{align*}
with $\delta_n \sim d_n n ^{-1}$. Then (\ref{LocalAlternativeConvergence}) holds with $g(x) = F^{\ast}(x) - F(x)$ and moreover
\begin{align*}
P ( \max\{G(X_1), G_n(X_1) \} \leq x )  = & \ P (\max \{F^{-1} \circ \Phi(X_1), F_{(n)}^{-1} \circ \Phi (X_1) \} \leq x ) \\
= & \ P ( \Phi(X_1) \leq \min \{F(x), F_{(n)}(x)\}) \\
= & \ \min \{F(x), F_{(n)}(x)\}.
\end{align*}
Analogously one has $P ( \min\{G(X_1), G_n(X_1) \} \leq x )  = \max \{F(x), F_{(n)}(x)\}$. Hence
\begin{align*}
P ( \min\{G(X_1), G_n(X_1) \} \leq x ) -P ( \max\{G(X_1), G_n(X_1) \} \leq x ) = \lvert F_{(n)}(x) -F(x) \rvert,
\end{align*}
thus Assumption A2 is also satisfied. For strongly mixing data similar local alternatives were considered by \citet{Ino}.
\end{Bei}

\begin{Bei} [$\chi^2$-distribution] \label{BeipsielChiQuadrat}
Consider a $\chi^2$-distribution given through $G(x)=x^2$ and note that the indicator functions have Hermite rank $m=2$, see also \citet{DeTa}. Further let
\begin{align*}
G_n(x) = \begin{cases} 
                            a_n x^2,  & \text{if } x \geq 0 ,\\
                         x^2, & \text{if } x<0,       
                                                      \end{cases}
\end{align*}
with Hermite ranks $m(n)=1$ for all $n \in \mathds N$. If $(a_n-1) \sim d_{n,2}/n$, then one can show (similar to the case of a variance change in Example \ref{BeispielChangeVarianz}) that
\begin{align*}
\frac n {d_{n,2}} \left ( P \left ( G(X_1) \leq x \right ) -P \left ( G_n(X_1) \leq x \right ) \right ) \\
\rightarrow C \sqrt{x} \phi( \sqrt{x} )1_{[0,\infty)} (x),
\end{align*}
uniformly in $x$. As Assumption A2 is satisfied, too, we may apply Corollary \ref{KorAsVertCPStat} (ii) with function $g(x) =C \sqrt{x} \phi( \sqrt{x} )1_{[0,\infty)} (x)$ and $m=2$.
\end{Bei}

\begin{Bei} [Multiple Hermite processes in the limit] \label{BeispielKomplizierterLimes}

In the previous example, together with the marginal distribution, also the Hermite rank has changed. However, the limiting process seems to be untouched by this fact and one might ask whether this is intuitive or not.

It is caused by the fact that the change in the distribution and the change in the Hermite coefficients, both originating in the difference of the functions $G(x)$ and $G_n(x)$, are of the same order. 

To get an additional Hermite process in the limit, one would need $(a_n-1) \sim d_{n,2}/d_{n,1}$, see Corollary \ref{KonvEmpProArray} and its proof. But then 
\begin{align*}
\frac n {d_{n,2}} \sup_x \left \lvert F(x)  - F_{(n)}(x) \right \rvert = \frac n {d_{n,1}} \frac {d_{n,1}} {d_{n,2}} \sup_x \left \lvert F(x)  - F_{(n)}(x) \right \rvert
\rightarrow \infty,
\end{align*}
and the test would have asymptotic power $1$.

To achieve nontrivial asymptotic power one has to consider structural breaks that consists of two aspects and where only one is captured by the marginal distribution. To this end define the transformations
\begin{align*}
G(x) = \Phi ^{-1} (F( \lvert x \rvert ))=  \Phi ^{-1} (2\Phi(\lvert x \rvert )-1)
\end{align*}
and 
\begin{align*}
G_n(x) = \Phi ^{-1} (F_{(n)}^{\ast} (G_n^{\ast} (x) ) + \mu_n,
\end{align*}
where $F_{(n)}^{\ast}(x) = P(G_n^{\ast}(X_i) \leq x)$ and 
\begin{align*}
G_n^{\ast}(x) = \begin{cases} 
                            a_n x^2,  & \text{if } x \geq 0 ,\\
                         x^2, & \text{if } x<0,       
                                                      \end{cases}
\end{align*}
for some sequence $(a_n)_n$ with $a_n \not=1$ and $a_n \rightarrow 1$.
On the one hand, $\{1_{\{G() \leq x \}} \}_x$ has Hermite rank $m=2$ and $G(X_i) \sim N(0,1)$. On the other hand, $\{1_{\{G_n() \leq x \}} \}_x$ has Hermite rank $m(n)=1$ for all $n \in \mathds N$ and $G_n(X_i) \sim N(\mu_n,1)$. Now let $\mu_n \sim d_{n,2}/n$, then Example \ref{MeanShift} applies and we obtain
\begin{align*}
\frac n {d_{n,2} } (F_{(n)}(x) - F(x) ) = \frac n {d_{n,2} } (\Phi(x-\mu_n) - \Phi(x) )  \rightarrow C \phi(x),
\end{align*}
for any sequence $(a_n)_n$. In contrast, the convergence of the Hermite coefficients is highly influenced by $(a_n)_n$. If the sequence is chosen such that $(a_n-1) \sim d_{n,2}/ d_{n,1}$ (therefore, it converges slower than $\mu_n$), then the sequential empirical process will converge towards
\begin{align*}
K(x,t) = \begin{cases} 
                            J_2(x)/2 Z_2(t),  & \text{if } t \leq \tau ,\\
                         \tilde J_1(x) Z_1(t) + J_2(x)/2 Z_2(t) + , & \text{if } t > \tau.       
                                                      \end{cases}
\end{align*}
Actually this can be proved similar to Corollary \ref{KonvEmpProArray}. Moreover, the Kolmogorov-Smirnov statistic converges weakly to
\begin{align*}
\sup_{t \in [0,1]} \sup_{x \in \mathds R} \lvert K(x,t) - tK(x,1) - \psi_{\tau}(t) C \phi(x) \rvert.
\end{align*}
We find this example rather pathological, therefore such situations are excluded from the main results via Assumption A2.
\end{Bei}

\subsection{Asymptotic relative efficiency}

By studying the asymptotic distributions under local alternatives one might compare different tests in terms of the asymptotic relative efficiency (ARE). Here we give a precise definition of the ARE in the very special context of our change-point setting. The general idea is due to \citet{Pit} (for a published article see for example \citet{Noe2}) and was formalized in \citet{Noe}. Of course it can be extended to all kinds of testing procedures.

\begin{Def} \label{DefinitionARE}
Let $T_1$ and $T_2$ represent two change-point test procedures. Consider the local alternatives 
\begin{align*}
& (G, G_{n_k} , \tau ) \text{ and a sample size } (n_k)_k, \\
& (G, \tilde G_{m_k}, \tau ) \text{ and a sample size } (m_k)_k,
\end{align*}
such that $G_{n_k} (x) = \tilde G_{m_k}(x) = G_k(x)$ for all $k \geq 1$ and $x \in \mathds R$.

Let $\beta_1$ be the asymptotic power of the test $T_1$ against the local alternatives given by $(G, G_{n_k} , \tau , (n_k)_k)$ and $\beta_2$ be the asymptotic power of the test $T_2$ against the local alternatives given by $(G, \tilde G_{m_k} , \tau , (m_k)_k)$. If $\beta_1$ equals $\beta_2$, then the \emph{asymptotic relative efficiency} (ARE) of the tests $T_1$ and $T_2$ is defined as
\begin{align*}
ARE(T_1,T_2) = \lim_{k \to \infty } \frac {m_k}{n_k}.
\end{align*}
\end{Def}

\begin{Bei} [Mean-shift in Gaussian data] \label{AREMeanShift}
Consider $G(x)=x$ and $G_n(x) = G(x)+ \mu_n$, in other words a mean-shift in Gaussian data. As for the Hermite coefficient function, we get $J_1(x) = - \phi(x)$, where $\phi$ is the standard normal probability density. Thus, according to Corollary \ref {KorAsVertCPStat}, the test statistic $T_n$ converges towards
\begin{align*}
 \sup_{x \in \mathds R}  \lvert \phi(x) \rvert \sup_{t \in [0,1]} \left \lvert \tilde B_H(t)+ C \psi_{\tau}(t) \right \rvert = (2 \pi)^{-1/2} \sup_{t \in [0,1]} \left \lvert \tilde B_H(t)+ C \psi_{\tau}(t) \right \rvert,
 \end{align*}
 whereas under the Null, that is we have a stationary standard Gaussian sequence, the limit distribution would be
 \begin{align*}
 \sup_{x \in \mathds R}  \lvert \phi(x) \rvert \sup_{t \in [0,1]} \left \lvert \tilde B_H(t) \right \rvert = (2 \pi)^{-1/2} \sup_{t \in [0,1]} \left \lvert \tilde B_H(t) \right \rvert.
 \end{align*}
For the Cram\'{e}r-von Mises statistic we obtain analogously the limit distributions
\begin{align*}
\int \phi^3(x) dx \sup_{t \in [0,1]} \left \lvert \tilde B_H(t)+ C \psi_{\tau}(t) \right \rvert^2 \ \ \text{and} \ \ \int \phi^3(x) dx, \sup_{t \in [0,1]} \left \lvert \tilde B_H(t) \right \rvert^2
\end{align*}
under local alternative and hypothesis, respectively. Hence in this special case the CUSUM test, the Wilcoxon test (see \citet{DeRoTa2} for each), the Kolmogorov-Smirnov test and the Cram\'{e}r-von Mises test all have the same asymptotic power, namely 
\begin{align}
P \left ( \sup_{t \in [0,1]} \lvert \tilde B_H(t) +C \psi_{\tau} (t) \rvert > q_{1-\alpha, H} \right ), \label{AsymptoticPower}
\end{align}
where $q_{1-\alpha,H}$ is the $(1-\alpha)$-quantile of the maximum of a fractional Brownian bridge $\sup_{t \in [0,1]} \lvert \tilde B_H(t) \rvert$. \\
As a direct consequence, one gets that the ARE of the four tests is $1$. This result is quite surprising, keeping in mind that CUSUM and Wilcoxon tests are designed to detect level-shifts, while our tests have power against all kinds of distributional changes. 
\end{Bei}

For non-Gaussian data and change-points beyond a simple mean-shift, the investigation of the ARE is not that straightforward. In fact, little is known about the distribution of 
\begin{align*}
\sup_t \lvert \tilde B_H(t) + f(t) \rvert,
\end{align*}
and even less if higher order Hermite processes are considered. This seems to prevent a precise computation of the ARE in many cases. However, one might derive lower bounds for the efficiency as we do in next example for a combined change in mean and variance. Unlike in the previous example, we will make use of the subtle definition of the ARE.

\begin{Bei}[Combined change in mean and variance] \label{AREMeanVarChange}

Let $G(x) = x$ and $G_k(x) = \sigma_k x+\mu_k$, that is a combined change of mean and variance in Gaussian data. If further $\mu_k k/d_{k,1} \rightarrow C_1>0$ and $(1-1/\sigma_k)k/d_{k,1} \rightarrow C_2>0$, then by example \ref{BeispielChangeVarianz} the empirical bridge-type process converges to (for the sample size $k \to \infty$)
\begin{align*}
\phi(x) \tilde B_H(t) + \phi(x) (C_1+C_2x) \psi_{\tau}(t), \ \ \ \ x \in \mathds R, \ t \in[0,1].
\end{align*}
We now consider slightly modified Cram\'{e}r-von Mises and CUSUM tests, in detail, instead of $[0,1]$ the supremum is taken over $[\kappa_1,\kappa_2]$ for some $\kappa_1 \in (0,1/2)$ and $\kappa_2 \in (1/2,1)$. 

The asymptotic distribution of the CUSUM test has been derived in \citet{DeRoTa2}, but only in the case of a mean-shift with constant variance. However, for $EG^2(X_i) < \infty$ the CUSUM statistic is a continuous functional of the sequential empirical process. Thus, we might apply our Theorem \ref{ConvChangeAltA} and conclude that the CUSUM statistic converges under this type of local alternatives to
\begin{align*}
 & \ \sup_{t \in [\kappa_1,\kappa_2]}  \left \lvert \int \phi(x) \left ( \tilde B_H(t)+ (C_1+C_2 x) \psi_{\tau}(t) \right)  dx \right \rvert \\
= & \sup_{t \in [\kappa_1,\kappa_2]}  \left \lvert \tilde B_H(t)+ C_1 \psi_{\tau}(t) \right \rvert.
\end{align*}
Note that this is the same limit as under a mean-shift with constant variance and thus, too, the asymptotic power is the same as in example \ref{AREMeanShift}.

The limiting distribution of the Cram\'{e}r-von Mises statistic is given by
\begin{align*}
 Z^2= & \ \sup_{t \in [\kappa_1,\kappa_2]}  \int \phi^3(x) \left ( \tilde B_H(t)+ (C_1+C_2 x) \psi_{\tau}(t) \right)^2 dx  \\
= & \ \sup_{t \in [\kappa_1,\kappa_2]} \left \{ \int \phi^3(x) dx  \left ( \tilde B_H(t)+ C_1 \psi_{\tau}(t) \right)^2 + C_2^2 \psi_{\tau}^2(t)  \int \phi^3(x)x^2 dx \right \},
\end{align*}
and for its asymptotic power we obtain
\begin{align}
\nonumber & \ P\left (Z^2> q^2_{1-\alpha,H} \int \phi^3(x) \ dx \right ) \\
= & \ P\left (Z^2>   q^2_{1-\alpha,H}  \int \phi^3(x) \ dx \ , \  \sup_{t \in [\kappa_1,\kappa_2]} \{ \tilde B_H(t) \} > q_{1-\alpha,H} \right ) \label{BewAREMeanVarGruen} \\
\nonumber & \ + P\left (Z^2> q^2_{1-\alpha,H} \int \phi^3(x) \ dx \ , \ \sup_{t \in [\kappa_1,\kappa_2]} \{ \tilde B_H(t) \} \leq q_{1-\alpha,H} \right  ).
\end{align}
First assume $\sup_t \{ \tilde B_H(t) \} \leq q= q_{1-\alpha,H}$ and consider $C_1^{\ast}$, given by
\begin{align*}
C_1^{\ast} = & \  f^{\ast} (C_1,C_2,q,\tau,\kappa_1,\kappa_2) \\
= & \ \min_{t \in [\kappa_1, \kappa_2]} \left \{ \frac {\sqrt{q^2+2qC_1\psi_{\tau}(t) + \left (C_1^2+C_2^2 (\int \psi^3(x)x^2 dx/ \int \phi^3(x) dx) \right) \psi_{\tau}^2(t)} - q} {\psi_{\tau}(t)} \right \}.
\end{align*}
Now $C_1^{\ast}$ is constructed in a way, such that \footnote{Here weed the restriction to $[\kappa_1,\kappa_2]$.}
\begin{align*}
C_1^{\ast} >C_1
\end{align*}
and for all $\omega \in \Omega$ with $\sup_t \tilde B_H(t;\omega) \leq q$
\begin{align*}
 Z^2 = & \ \sup_{t \in [\kappa_1,\kappa_2]} \left \{ \int \phi^3(x) dx  \left ( \tilde B_H(t)+ C_1 \psi_{\tau}(t) \right)^2 + C_2 \psi_{\tau}^2(t)  \int \phi^3(x)x^2 dx \right \} \\
> & \  \sup_{t \in [\kappa_1,\kappa_2]} \left \{ \int \phi^3(x) dx  \left ( \tilde B_H(t)+ C_1^{\ast} \psi_{\tau}(t) \right)^2  \right \}.
\end{align*}
If, on the other hand, $\sup_t \{ \tilde B_H(t) \} > q_{1-\alpha,H}$, then (because $C_1>0$) automatically $Z^2> q^2_{1-\alpha,H} \int \phi^3(x) dx$. Combining these two findings with (\ref{BewAREMeanVarGruen}) we can bound the asymptotic power from below by
\begin{align}
\nonumber & \ P\left (Z^2> q^2_{1-\alpha,H} \int \phi^3(x) \ dx \right ) \\
\nonumber = & \ P \left ( \sup_{t \in [\kappa_1,\kappa_2]}  \int \phi^3(x) \left ( \tilde B_H(t)+C_1^{\ast} \psi_{\tau}(t) \right)^2 \ dx > q^2_{1-\alpha,H} \int \phi^3(x) dx \right ) \\
\geq & \ P \left ( \sup_{t \in [\kappa_1,\kappa_2]} \left \lvert  \tilde B_H(t)+C_1^{\ast} \psi_{\tau}(t) \right \rvert  > q_{1-\alpha,H} \right ), \label{AsPowerBoundRot}
\end{align}
for $C_1^{\ast} > C_1$. 

Now we are ready to compute the ARE. To this end we chose different sample sizes for both test. In detail, $(n_k)_k$ for the Cram\'{e}r-von Mises test and $(m_k)_k$ for the CUSUM test. Moreover, the local alternatives are such that $G_{n_k}^{CvM}(x) = G_{m_k}^{CUSUM}(x) = G_k(x)$ for all $x\in \mathds R$, consequently
\begin{align*}
\mu^{(1)}_{n_k} = \mu^{(2)}_{m_k} = \mu_k \ \ \ \ \text{and} \ \ \ \ \sigma^{(1)}_{n_k} = \sigma^{(2)}_{m_k} = \sigma_k.
\end{align*}
For the CUSUM test, in order to achieve at least asymptotic power $\beta$, its limit distribution has to satisfy
\begin{align*}
P \left ( \sup_{t \in [0,1]} \lvert \tilde B_H(t) +C_1^{\ast} \psi_{\tau} (t) \rvert > q_{\alpha, H} \right ) \geq \beta.
\end{align*}
In other words, $C_1^{\ast} = \pi^{-1} (\beta)$, where
\begin{align*}
\pi (C^{\ast} ) = P \left ( \sup_{t \in [0,1]} \lvert \tilde B_H(t) +C_1^{\ast} \psi_{\tau} (t) \rvert > q_{\alpha, H} \right )
\end{align*}
and $\pi^{-1}$ is the generalized inverse. Therefore, the sample size of the CUSUM test has to be chosen such that
\begin{align}
C_1^{\ast} = \lim_{k \to \infty} \frac {m_k} {d_{m_k}} \mu^{(2)}_{m_k} = \lim_{k \to \infty} \frac {m_k} {d_{m_k}} \mu_k.
\end{align}
We also obtain (as $\mu_k$ and $(1-1/\sigma_k)$ are of the same order)
\begin{align*}
\lim_{k \to \infty} \frac {m_k} {d_{m_k}} \left ( 1- \frac 1 {\sigma^{(2)}_{m_k}} \right ) =\lim_{k \to \infty} \frac {m_k} {d_{m_k}} \left ( 1- \frac 1 {\sigma_k} \right ) = C_2^{\ast},
\end{align*}
for some $C_2^{\ast}>0$.

Next we will select the sample size for the Cram\'{e}r-von Mises test such that its asymptotic power might be bounded from below as in (\ref{AsPowerBoundRot}) (and therefore by $\beta$). To this end choose $C_1>0$, such that
\begin{align*}
\tilde f(C_1) = f(C_1,C_1 \frac {C_2^{\ast}} {C_1^{\ast}}, q,\tau, \kappa_1,\kappa_2) = C_1^{\ast}.
\end{align*}
The function $\tilde f \colon [0,\infty) \to [0,\infty)$ is monotone increasing, surjective and continuous (as the minimum is attained either in $\kappa_1$ or $\kappa_2$), therefore such an $C_1$ always can be found. By construction of the function $f$  it follows that $C_1<C_1^{\ast}$. Now let the sample size of the Cram\'{e}r-von Mises test satisfy
\begin{align}
C_1 = \lim_{k \to \infty} \frac {n_k} {d_{n_k}} \mu^{(1)}_{n_k} = \lim_{k \to \infty} \frac {n_k} {d_{n_k}} \mu_k.
\end{align}
Moreover, we observe
\begin{align*}
C_2 =& \ \lim_{k \to \infty} n_k/d_{n_k} (1-1/\sigma_k) \\
= & \ \lim_{k \to \infty} \left ( n_k/d_{n_k} \mu_k \right ) \left ( m_k / d_{m_k} \mu_k \right )^{-1} \left (m_k/ d_{m_k} (1-1/\sigma_k) \right )   \\
= & \ C_1 C_2^{\ast}/ C_1^{\ast}.
\end{align*}
Thus,
\begin{align*}
f(C_1,C_2, q,\tau, \kappa_1,\kappa_2)= C_1^{\ast},
\end{align*}
and we observe for the asymptotic power of the Cram\'{e}r-von Mises test
\begin{align*}
& P \left ( \sup_{t \in [\kappa_1,\kappa_2]} \left \{ \int \phi^3(x) \ dx  \left ( \tilde B_H(t)+ C_1 \psi_{\tau}(t) \right)^2 + C_2 \psi_{\tau}^2(t)  \int \phi^3(x)x^2 \ dx \right \} > q_{1-\alpha,H} \int \phi^3(x) dx \right ) \\
\geq & \ P \left ( \sup_{t \in [\kappa_1,\kappa_2]} \left \lvert  \tilde B_H(t)+C_1^{\ast} \psi_{\tau}(t) \right \rvert  > q_{1-\alpha,H} \right ) \\
\geq & \ \beta.
\end{align*}
So both test have (at least) asymptotic power $\beta$ against the local alternatives $(G,G_k,\tau)$. Finally,
\begin{align*}
\left ( \frac {m_k} {n_k} \right )^{1-H} = \frac {m_k} {d_{m_k}} \mu_k \frac {d_{n_k}} {n_k} \mu_k^{-1} \frac {L^{(1/2)}(m_k)} {L^{(1/2)}(n_k)}
\\ \rightarrow \frac {C_1^{\ast}} {C_1} >1,
\end{align*}
by construction of the sample sizes and the definition of slowly varying functions. Consequently 
\begin{align*}
ARE(CvM,CUSUM) = (C_1^{\ast}/C_1)^{1/(1-H)}>1.
\end{align*}
 In other words, the Cram\'{e}r-von Mises test is asymptotically more efficient, no matter how small the additional variance-change is.
\end{Bei}

\subsection{The empirical process of triangular arrays} \label{LimitTheoremsArrays}

Since the work of \citet{DeTa,DeTa2}, uniform reduction principles have become the main tool in the analysis of empirical processes of long-range dependent data. More precisely, the empirical process gets approximated only by the first term of its Hermite expansion (if the underlying process is not Gaussian other expansions are available). However, most results are investigated for stationary sequences. When considering $ G(X_1),  \dots , G(X_{ \lfloor n \tau \rfloor }), G_n(X_{ \lfloor n \tau \rfloor +1 }), \dots, G_n(X_n)$, the empirical process of the first $\lfloor n \tau \rfloor$ random variables can be approximated just as in \citet{DeTa}. In contrast the Hermite expansion of $1_{ \{ G_n(X_i) \leq x \}} - F_{(n)}(x)$ is
\begin{align*}
\sum_{q=m^{\ast}}^{\infty} \frac {J_{q,n}(x) } {q!} H_q(X_i).
\end{align*}
Two difficulties arise. First, $m^{\ast}$ might be smaller than $m$, the Hermite rank of $\{1_{ \{ G(\cdot) \leq x \}  }\}_{x \in \mathds R}$. Secondly, the coefficients $J_{q,n}(x)$ depend on $n$ and might converge uniformly to $0$. Thus, it is a priori not clear which term of the Hermite expansion is asymptotically dominant or if there are even more than one. The next result is a reduction principle that lays emphasis on this aspects. We will make use of it in the proof of Theorem \ref{ConvChangeAltA}, but is also of interest on its own.

\begin{The} \label{WeakRedPrinArray}
Let $\{G_n\}_n$ be a sequence of measurable functions and let $m(n)$ be the sequence of Hermite ranks of $\{1_{ \{ G_n(\cdot) \leq x\}} \}_{x \in \mathds R}$. Then, for any $m \in \mathds N$ with $m(n) \leq m < 1/D$ (for $n \geq n_0$),
\begin{align*}
P \left ( \sup_{t \in (0,1)} \sup_{ \ x \in \mathds R} \frac 1 {d_{n,m}}  \left \lvert \sum_{i=1}^{\lfloor nt \rfloor} (1_{\{ G_n(X_i) \leq x \}}  - \sum_{q=0}^m \frac {J_{q,n}(x)} {q!} H_q(X_i)  ) \right \rvert > \epsilon \right ) \leq Cn^{-\kappa}(1+ \epsilon^{-3}),
\end{align*}
where $C$ and $\kappa$ do not depend on $n$.
\end{The}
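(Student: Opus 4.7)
The strategy is a uniform Hermite-reduction principle, adapted from the stationary Dehling--Taqqu theory to this triangular-array setting. Observe first that the $L^2$-Hermite expansion
\begin{align*}
1_{\{G_n(X_i)\leq x\}}=\sum_{q=0}^{\infty}\frac{J_{q,n}(x)}{q!}H_q(X_i),\qquad J_{0,n}(x)=F_{(n)}(x),
\end{align*}
identifies the inner difference as the Hermite tail $\sum_{q>m}(J_{q,n}(x)/q!)H_q(X_i)$, so the target reduces to an estimate on
\begin{align*}
R_n(t,x):=\sum_{i=1}^{\lfloor nt\rfloor}\sum_{q>m}\frac{J_{q,n}(x)}{q!}H_q(X_i).
\end{align*}
My plan is to combine a moment bound at fixed $(t,x)$ with a chaining argument on an $F_{(n)}$-adapted grid.

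For the moment bound, orthogonality of Hermite polynomials under the Gaussian law gives $E[H_p(X_i)H_q(X_j)]=\delta_{pq}q!\rho(i-j)^q$, whence
\begin{align*}
E R_n(t,x)^2=\sum_{q>m}\frac{J_{q,n}^2(x)}{q!}\sum_{i,j\leq\lfloor nt\rfloor}\rho(i-j)^q.
\end{align*}
Parseval's identity $\sum_q J_{q,n}^2(x)/q!=F_{(n)}(x)(1-F_{(n)}(x))\leq 1/4$ controls the Hermite coefficients uniformly in $n$ and $x$. For $q=m+1$ one has $\sum_{|k|<n}|\rho(k)|^q=O(n^{(1-qD)\vee 0}L^q(n))$, so $\sum_{i,j\leq n}\rho(i-j)^{m+1}$ is of order $d_{n,m+1}^2$, which is smaller than $d_{n,m}^2$ by the factor $n^{-D}L(n)$; higher Hermite indices decay faster and are absorbed. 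Standard moment inequalities for sums of Hermite polynomials under a Gaussian field then upgrade this to $E|R_n(t,x)|^p\leq C_p d_{n,m+1}^p$ for every $p\geq 2$.

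Uniformity in $x$ is handled by the $F_{(n)}$-quantile grid $x_j=F_{(n)}^{-1}(j/N)$, $j=0,\dots,N$, with $N=n^{\alpha}$ for a small $\alpha>0$. The key inequality
\begin{align*}
|J_{q,n}(x)-J_{q,n}(y)|^2\leq q!\,|F_{(n)}(x)-F_{(n)}(y)|,
\end{align*}
obtained from Cauchy--Schwarz, controls the oscillation of the low-order Hermite corrections between consecutive grid points, while the indicator oscillation $\sum_i 1_{\{x_j<G_n(X_i)\leq x_{j+1}\}}$ is controlled by a second-moment estimate. Uniformity in $t$ is cheap, since $R_n(\cdot,x)$ is piecewise constant on $t\in[(i-1)/n,i/n)$, so only the $n$ jump times matter. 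Applying Markov's inequality with the third-moment estimate at each of the $O(n^{1+\alpha})$ grid pairs contributes the $\epsilon^{-3}$ term, and a crude second-moment estimate on the oscillation event gives the additive constant $1$ in $1+\epsilon^{-3}$. Choosing $\alpha$ small enough that $n^{1+\alpha}(d_{n,m+1}/d_{n,m})^3$ is $o(n^{-\kappa})$ yields the stated bound.

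The principal obstacle is that both $F_{(n)}$ and the coefficients $J_{q,n}(x)$ depend on $n$, so every constant in the chaining has to be controlled uniformly in $n$. Parseval's identity and the $F_{(n)}$-adapted grid neatly sidestep this by ensuring that the only $n$-dependent quantity entering the bounds is $F_{(n)}(1-F_{(n)})\leq 1/4$. A secondary technical nuisance is that the Hermite series must be split between the regimes $qD<1$ and $qD\geq 1$ when estimating $\sum_{|k|<n}|\rho(k)|^q$; this is routine bookkeeping but must be tracked carefully, since the short-range contributions compete with the long-range ones only through the subdominant tail of the Hermite expansion.
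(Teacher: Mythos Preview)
Your arithmetic for the grid resolution does not close. Controlling the oscillation between adjacent $F_{(n)}$-grid points forces $N=n^{\alpha}$ to be large: the deterministic piece of the low-order correction contributes $d_{n,m}^{-1}\lfloor nt\rfloor\,F_{(n)}(x_j,x_{j+1})\sim n^{mD/2}/N$, so you need $\alpha\geq mD/2$ (the Cauchy--Schwarz bound $|J_{q,n}(x_j,x_{j+1})|\leq\sqrt{q!/N}$ combined with $d_{n,q}/d_{n,m}\sim n^{(m-q)D/2}$ for $q<m$ imposes an even stronger constraint). On the other hand, your union bound over $n^{1+\alpha}$ pairs with the third-moment estimate yields at best $n^{1+\alpha}(d_{n,m+1}/d_{n,m})^{3}\sim n^{1+\alpha-3D/2}$, which decays only if $\alpha<3D/2-1$. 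The two constraints are never compatible: $mD/2<3D/2-1$ would require $D>2/(3-m)$, which for $m\geq 1$ exceeds $1$. So no admissible $\alpha$ exists, and the claimed choice ``$\alpha$ small enough'' is vacuous.

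The paper circumvents both obstructions. In $x$ it runs a genuine dyadic chaining on a $\Lambda_n$-adapted grid and uses only the \emph{second}-moment bound, but in the increment form $E\,S_n(l;x,y)^{2}\leq C(l/n)\,n^{-\gamma}F_{(n)}(x,y)$; the factor $F_{(n)}(x,y)$ makes the sum over the $2^{k}$ intervals at level $k$ telescope to $1$, so the union bound over $x$ costs only $(K+3)^{5}$ with $K=O(\log(n/\epsilon))$, and this is where the extra $\epsilon^{-1}$ in $\epsilon^{-3}$ comes from. Your Parseval bound $\sum_q J_{q,n}^{2}/q!\leq 1/4$ discards exactly this factor. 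In $t$ the paper does \emph{not} take a union bound over the $n$ jump times (which would cost a factor $n$ and overwhelm $n^{-\gamma}$ since $\gamma=(1-mD)\wedge D<1$); instead it tracks the $(l/n)$ scaling in the fixed-$l$ bound and then invokes the dyadic maximal-inequality argument of Dehling--Taqqu, Theorem~3.1, to pass to $\sup_{l\leq n}$ with only constant loss.
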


\begin{Bem} (i) Theorem \ref{WeakRedPrinArray} contains the reduction principle of \citet{DeTa} as a special case ($G_n(x) = G(x)$ and $m(n)=m$). \\
(ii) Note that $\{1_{ \{ G_n(\cdot) \leq x \}  } - F_{(n)}(x)\}_{x \in \mathds R}$ might has a Hermite rank smaller than $m$ (say $m^{\ast} < m$). Thus, one might expect $d_{n,m^{\ast}}^{-1}$ as normalization.  The weaker normalization $d_{n,m}^{-1}$ is however possible since the empirical process is approximated by additional terms of the Hermite expansion, in detail those up to $m$. \\
(iii) A similar result is given by \citet{Wu}, who considers linear long memory processes and even shows convergences with respect to a weighted supremum metric. Then again, he considers only the normal empirical process, while we also treat the sequential version. Moreover, we consider triangular arrays, which \citet{Wu} does not.
\end{Bem}

\begin{Kor} \label{KonvEmpProArray}
Let $\{G_n\}_n$ be sequence of measurable functions and let $m(n)$ be the sequence of Hermite ranks of $\{1_{ \{ G_n(\cdot) \leq x\}} \}_{x \in \mathds R}$. If further $m^{\ast} \leq m(n) \leq m < 1/D$ for all $n \geq n_0$ and
\begin{align*}
\frac {d_{n,q}} {d_{n,m}} \frac {J_{q,n}(x)} {q!} \rightarrow h_q(x) \ \ \forall q \in \{m^{\ast}, \dots, m\},
\end{align*}
uniformly in $x$, then
\begin{align*}
\frac 1 {d_{n,m}} \sum_{i=1}^{\lfloor nt \rfloor} (1_{\{ G_n(X_i) \leq x \}} - F_{(n)}(x)) \xrightarrow{\mathcal D} \sum_{q=m^{\ast}}^m h_q(x) Z_q(t).
\end{align*}
$(Z_{q,H}(t))_{t \in [0,1]}$ are uncorrelated, $q$th order Hermite processes. 
\end{Kor}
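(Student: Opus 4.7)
The plan is to use Theorem \ref{WeakRedPrinArray} to reduce the empirical process to a finite linear combination of partial sums of Hermite polynomials, and then invoke Taqqu's non-central limit theorem (jointly over the orders $q \in \{m^{\ast},\ldots,m\}$) together with the hypothesized uniform convergence of the coefficient functions.

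First I would apply Theorem \ref{WeakRedPrinArray} with the given $m$. Since $m(n) \geq m^{\ast}$ for all $n \geq n_0$, the coefficients $J_{q,n}(x)$ vanish for $q < m^{\ast}$, and the $q=0$ term in the Hermite expansion equals $F_{(n)}(x)$; thus subtracting $F_{(n)}(x)$ from the indicator inside the sum yields, uniformly in $(t,x) \in [0,1] \times \mathds R$,
\begin{align*}
\frac 1 {d_{n,m}}\sum_{i=1}^{\lfloor nt \rfloor}\bigl(1_{\{G_n(X_i)\leq x\}}-F_{(n)}(x)\bigr)
= \sum_{q=m^{\ast}}^{m} \frac{d_{n,q}}{d_{n,m}}\,\frac{J_{q,n}(x)}{q!}\cdot \frac{1}{d_{n,q}}\sum_{i=1}^{\lfloor nt\rfloor} H_q(X_i) + o_P(1).
\end{align*}

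Second, by Taqqu's non-central limit theorem, for each fixed $q \in \{m^{\ast},\ldots,m\}$ the normalized partial sum process $d_{n,q}^{-1}\sum_{i=1}^{\lfloor nt\rfloor}H_q(X_i)$ converges in $D[0,1]$ to $Z_{q,H}(t)/q!$ (up to the constant absorbed in the definition of $d_{n,q}$). For the joint statement one appeals to the multivariate non-central limit theorem: the vector-valued process $\bigl(d_{n,q}^{-1}\sum_{i=1}^{\lfloor n\cdot\rfloor} H_q(X_i)\bigr)_{q=m^{\ast}}^{m}$ converges weakly in $D[0,1]^{m-m^{\ast}+1}$ to $(Z_{q,H})_{q=m^{\ast}}^{m}$, where the components are uncorrelated by orthogonality of Hermite polynomials (this is classical, since the limit processes live in distinct Wiener chaoses and finite-dimensional cross-moments vanish). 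Tightness of the joint process is inherited from tightness of each coordinate.

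Third, the deterministic factor $\frac{d_{n,q}}{d_{n,m}}\frac{J_{q,n}(x)}{q!}$ converges uniformly in $x$ to $h_q(x)$ by hypothesis. Combining this with the joint functional convergence above via a Slutsky-type argument (the map $((f_q)_q,(a_q(\cdot))_q) \mapsto \sum_q a_q(x) f_q(t)$ is continuous in the uniform topology on $D([0,1]\times[-\infty,\infty])$ when the $a_q$ are bounded functions converging uniformly) yields
\begin{align*}
\sum_{q=m^{\ast}}^{m} \frac{d_{n,q}}{d_{n,m}}\,\frac{J_{q,n}(x)}{q!}\cdot \frac{1}{d_{n,q}}\sum_{i=1}^{\lfloor nt\rfloor} H_q(X_i)
\xrightarrow{\mathcal D} \sum_{q=m^{\ast}}^{m} h_q(x)\,Z_{q,H}(t).
\end{align*}
Together with the reduction step from Theorem \ref{WeakRedPrinArray}, this proves the claim.

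The main obstacle I foresee is justifying the uniformity in $x$ when combining the deterministic coefficient convergence with the stochastic functional convergence in $t$: one must upgrade the continuous-mapping step to the product space $D([0,1]\times[-\infty,\infty])$ with uniform topology. This is handled by exploiting that the $h_q$ are bounded (as uniform limits on $\mathds R$ of bounded functions, since $J_{q,n}(x)$ is bounded by $P(G_n(X_1)\leq x)$ times a constant through Cauchy-Schwarz) and that only finitely many $q$ are involved, so the sum remains uniformly controlled.
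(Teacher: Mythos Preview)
Your proposal is correct and follows essentially the same route as the paper: apply the reduction principle of Theorem \ref{WeakRedPrinArray} to replace the centered empirical process by the finite sum $\sum_{q=m^{\ast}}^{m} (d_{n,q}/d_{n,m}) (J_{q,n}(x)/q!)\, d_{n,q}^{-1}\sum_{i\le \lfloor nt\rfloor} H_q(X_i)$, invoke the joint functional non-central limit theorem for the vector of normalized Hermite partial sums (the paper cites \citet{BaTa}, Theorem 4, for exactly this), and then pass to the limit in the deterministic coefficients via Slutsky and the continuous mapping theorem. The paper's handling of the ``main obstacle'' you flag is identical in spirit: it notes (Remark \ref{BemerkungEigenschaftFunktionen}) that the $h_q$ are bounded elements of $D[-\infty,\infty]$ as uniform limits of c\`adl\`ag functions, which is what makes the product map continuous in the uniform topology on $D([0,1]\times[-\infty,\infty])$.
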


\begin{Bem} (i) Comparing the limit process of Corollary \ref{KonvEmpProArray} to that of Theorem \ref{ConvChangeAltA} it is apparent that multiple Hermite Processes are involved. This is not the case in Theorem \ref{ConvChangeAltA}. The reason is Assumption A2, which causes the Hermite coefficients $J_{m,n}(x)$ to converge rather fast. \\
(ii) The Hermite processes occurring in the limit are dependent, see Proposition 1 in \citet{BaTa}.
\end{Bem}

\begin{Bem} \label{BemerkungEigenschaftFunktionen} In view of the proof of Corollary \ref{KonvEmpProArray} it is important to note that the functions $h_q$ are uniform limits of the c\'{a}dl\'{a}g-functions $J_{m,n}(x)$  and hence elements of $D[- \infty, \infty]$. As a consequence they are also bounded (\citet{Pol}).
\end{Bem}

\begin{Bei} \label{EsgibtsolcheFunktion}
There are indeed sequences of functions $\{G_n \}_n$ that satisfy the conditions of Corollary \ref{KonvEmpProArray}. Consider again the functions from Example \ref{BeipsielChiQuadrat}, namely $G_n(x) = x^2(1_{x\geq 0 } +a_n 1_{x<0})$ with $a_n \rightarrow 1$ and $a_n\not =1$. Thus, we are in the situation of Theorem \ref{WeakRedPrinArray} with $m(n) =1$ for all $n \in \mathds N$. One obtains, $a_n \rightarrow 1$s
\begin{align*}
\sup_{x \in \mathds R} \lvert J_{2,n}(x) -J_2(x) \rvert \rightarrow 0,
\end{align*}
with 
\begin{align*}
J_2(x) = E[1_{ \{X_1^2  \leq x \}} (X_1^2-1)] = -2 \sqrt x \phi( \sqrt x) 1_{ \{ x\geq 0\} }.
\end{align*}
If in addition $a_n \sim n^{-D/2}L^{1/2}(n) \sim d_{n,2}/d_{n,1}$, then
\begin{align*}
 \sup_{x} \left \lvert \frac {d_{n,1}} {d_{n,2}}J_{1,n}(x) - C x \phi( \sqrt x) 1_{x\geq0} \right \rvert \rightarrow 0,
\end{align*}
for some constant $C$ depending on $D$ only. Corollary \ref{KonvEmpProArray} then holds with $m=2$, $m^{\ast}=1$, $h_1(x) = C x \phi( \sqrt x) 1_{x\geq0}$ and $h_2(x) =J_2(x)/2$. 
\end{Bei}

\section{Simulation Study} \label{SimStudy}

\subsection{Fractional Gaussian Noise}

Consider a mean-shift in Gaussian data. Then Example \ref{MeanShift} states that the Cram\'{e}r-von Mises test (and the Kolmogorov-Smirnov test) are asymptotically as efficient as the CUSUM test. The goal of this simulation study is to examine whether this theoretical and asymptotic result carry over to the finite sample performance of the tests. We will consider samples of size $50$ to $400$. For these situations the approximation of the empirical process by its semi-degenerate limit process is quite inaccurate. The empirical size of the Cram\'{e}r-von Mises test will be therefore much larger than the nominal size, if critical values are deduced from the asymptotic distribution. Instead we simulate $J=1000$ Gaussian time series 
\begin{align*}
X_{j,1}, \dots X_{j,n} \ \ \ \ j=1, \dots J
\end{align*}
with Hurst coefficient $H$. In the simulation study we will use \textsl{fractional Gaussian noise} for this sequences. Subsequently, a Cram\'{e}r-von Mises statistic is calculated for each of the $J=1000$ Gaussian series, in detail
\begin{align*}
S_{n,j} = \max_{1 \leq k <n} \int_{x \in \mathds R} \left ( \sum_{i=1}^k 1_{ \{ X_{j,i} \leq x \} } - \frac{k}{n} \sum_{i=1}^n 1_{ \{ X_{j,i} \leq x \} } \right)^2 \ d \hat F_{n,j}(x) \ \ \ \ j=1, \dots J.
\end{align*}

We then use the empirical quantiles of $\{ S_{n,j} \}_{j=1}^J$ as critical values. The Cram\'{e}r-von Mises statistic is invariant under monotone transformations of the data (as is the Kolmogorov-Smirnov statistic). Hence the critical values are valid if our observations are monotone transformations of Gaussian data. We note that this is a strong assumption and that an accurate approximation of the empirical process for general long-range dependent data is an issue of future research. The CUSUM statistic is not invariant under monotone transformations. Therefore, the Wilcoxon change-point test is considered additionally. 

In the first part of the simulation study we treat realizations of a Gaussian process $X_1,\dots , X_n$ given by \textsl{fractional Gaussian noise}). For the implementation we have used the function \texttt{fgnSim} from the \texttt{R}-package \texttt{fArma}. Eventually a change is added by $Y_i = X_i + \mu 1_{\{i> \lfloor n \tau \rfloor \}}$ and the three mentioned change-point tests are applied to $Y_1,\dots ,Y_n$.

\begin{table*}
\centering
\caption{Empirical power, $H$ assumed to be known, size of level shift $\mu=1$,  relative change positions $\tau =0.2$ and $\tau =0.5$.}
\label{TabelleHassumedtobeknown}
\begin{tabular}{l c c c c c c c c}
  \toprule
& \multicolumn{8}{c}{ Relative change position $\tau =0.2$}\\ 
\midrule
 & \multicolumn{4}{c}{$H=0.6$} & \multicolumn{4}{c}{$H=0.7$}\\ 
 \cmidrule(lr){2-5} \cmidrule(lr){6-9}
 $n$ & 50  & 100 & 250 & 400 & 50  & 100 & 250 & 400 \\
  \midrule
S & 0.196 & 0.566 & 0.854 & 0.970 & 0.158 & 0.215 & 0.547 & 0.689 \\ 
  W & 0.263 & 0.525 & 0.910 & 0.983 & 0.201 & 0.233 & 0.501 & 0.636 \\ 
  C & 0.288 & 0.666 & 0.933 & 0.986 & 0.276 & 0.284 & 0.555 & 0.769 \\ 
    \midrule
  & \multicolumn{4}{c}{$H=0.8$} & \multicolumn{4}{c}{$H=0.9$}\\ 
 \cmidrule(lr){2-5} \cmidrule(lr){6-9}
 $n$ & 50  & 100 & 250 & 400 & 50  & 100 & 250 & 400 \\
  \midrule
  S & 0.101 & 0.221 & 0.241 & 0.350 & 0.057 & 0.098 & 0.236 & 0.167 \\ 
  W& 0.089 & 0.156 & 0.264 & 0.383 & 0.089 & 0.116 & 0.191 & 0.147 \\ 
  C & 0.171 & 0.234 & 0.348 & 0.349 & 0.164 & 0.127 & 0.239 & 0.223 \\ 
   \midrule
\\
& \multicolumn{8}{c}{ Relative change position $\tau =0.5$}\\ 
\midrule
 & \multicolumn{4}{c}{$H=0.6$} & \multicolumn{4}{c}{$H=0.7$}\\ 
 \cmidrule(lr){2-5} \cmidrule(lr){6-9}
 $n$ & 50  & 100 & 250 & 400 & 50  & 100 & 250 & 400 \\
  \midrule
S & 0.664 & 0.919 & 1.000 & 1.000 & 0.524 & 0.682 & 0.925 & 0.970 \\ 
  W & 0.621 & 0.930 & 0.998 & 1.000 & 0.513 & 0.742 & 0.906 & 0.967 \\ 
  C & 0.733 & 0.918 & 0.997 & 0.999 & 0.599 & 0.717 & 0.919 & 0.960 \\ 
      \midrule
  & \multicolumn{4}{c}{$H=0.8$} & \multicolumn{4}{c}{$H=0.9$}\\ 
 \cmidrule(lr){2-5} \cmidrule(lr){6-9}
 $n$ & 50  & 100 & 250 & 400 & 50  & 100 & 250 & 400 \\
  \midrule
  S & 0.418 & 0.504 & 0.655 & 0.830 & 0.359 & 0.461 & 0.475 & 0.526 \\ 
  W & 0.374 & 0.485 & 0.674 & 0.770 & 0.387 & 0.430 & 0.578 & 0.587 \\ 
  C & 0.400 & 0.553 & 0.673 & 0.766 & 0.393 & 0.499 & 0.522 & 0.553 \\ 
   \bottomrule
\end{tabular}
\end{table*}

\begin{table*}
\centering
\caption{Empirical size, estimated Hurst coefficient.}
\label{TabelleEmpSizeEstimatedCoefficient}
\begin{tabular}{l c c c c c c c c}
  \toprule
 & \multicolumn{4}{c}{$H=0.6$} & \multicolumn{4}{c}{$H=0.7$}\\ 
 \cmidrule(lr){2-5} \cmidrule(lr){6-9}
 $n$ & 50  & 100 & 250 & 400 & 50  & 100 & 250 & 400 \\
  \midrule
$S_{\hat H}$ & 0.067 & 0.088 & 0.065 & 0.058 & 0.080 & 0.076 & 0.063 & 0.043 \\ 
   $S_{\hat H_{\hat k}}$ & 0.082 & 0.105 & 0.085 & 0.083 & 0.122 & 0.143 & 0.107 & 0.081 \\ 
  $W_{\hat H}$ & 0.067 & 0.058 & 0.054 & 0.057 & 0.081 & 0.074 & 0.061 & 0.035 \\ 
   $W_{\hat H_{\hat k}}$ & 0.070 & 0.100 & 0.102 & 0.078 & 0.122 & 0.118 & 0.100 & 0.081\\ 
  $C_{\hat H}$ & 0.071 & 0.072 & 0.064 & 0.046 & 0.111 & 0.080 & 0.056 & 0.056 \\ 
   $C_{\hat H_{\hat k}}$ & 0.085 & 0.090 & 0.103 & 0.081 & 0.116 & 0.127 & 0.095 & 0.078 \\ 
  \midrule
  & \multicolumn{4}{c}{$H=0.8$} & \multicolumn{4}{c}{$H=0.9$}\\ 
 \cmidrule(lr){2-5} \cmidrule(lr){6-9}
 $n$ & 50  & 100 & 250 & 400 & 50  & 100 & 250 & 400 \\
  \midrule
  $S_{\hat H}$ & 0.094 & 0.104 & 0.062 & 0.063 & 0.075 & 0.070 & 0.092 & 0.080 \\ 
   $S_{\hat H_{\hat k}}$ & 0.136 & 0.141 & 0.099 & 0.077 & 0.087 & 0.127 & 0.118 & 0.112  \\ 
  $W_{\hat H}$ & 0.085 & 0.074 & 0.056 & 0.059 & 0.073 & 0.071 & 0.097 & 0.079 \\ 
   $W_{\hat H_{\hat k}}$ & 0.129 & 0.137 & 0.087 & 0.074 & 0.098 & 0.146 & 0.115 & 0.103 \\ 
  $C_{\hat H}$ & 0.165 & 0.101 & 0.051 & 0.046 & 0.309 & 0.257 & 0.112 & 0.075 \\ 
   $C_{\hat H_{\hat k}}$& 0.218 & 0.127 & 0.094 & 0.072 & 0.137 & 0.130 & 0.083 & 0.063 \\ 
 \bottomrule
\end{tabular}
\end{table*}

If the Hurst-coefficient is assumed to be known, the empirical size of the tests naturally equals the nominal one, due to the construction of the critical values. The empirical power of Cram\'{e}r-von Mises (denoted by $S_n$), Wilcoxon (denoted by $W_n$) and CUSUM test (denoted by $C_n$) is displayed in Table \ref{TabelleHassumedtobeknown}. If the change occurs in the middle of the observation period, the three tests are showing almost exactly the same performance, which matches the theoretical results. For early changes (after $20 \%$ of the observations) the CUSUM test is slightly more accurate than the other tests. Depending on sample size and strength of dependence, either the Cram\'{e}r-von Mises or the Wilcoxon test might be second best. \\

\begin{table*}
\centering
\caption{Empirical Power, estimated Hurst coefficient, size of level shift $\mu=1$,  relative change positions $\tau =0.2$ and $\tau =0.5$.}
\label{TablleEmpPower021}
\begin{tabular}{l c c c c c c c c}
  \toprule
& \multicolumn{8}{c}{ Relative change position $\tau =0.2$}\\ 
\midrule
 & \multicolumn{4}{c}{$H=0.6$} & \multicolumn{4}{c}{$H=0.7$}\\ 
 \cmidrule(lr){2-5} \cmidrule(lr){6-9}
 $n$ & 50  & 100 & 250 & 400 & 50  & 100 & 250 & 400 \\
  \midrule
$S_{\hat H}$ & 0.178 & 0.263 & 0.597 & 0.828 & 0.189 & 0.177 & 0.285 & 0.404 \\ 
  $S_{\hat H_{\hat k}}$ & 0.248 & 0.491 & 0.834 & 0.946 & 0.260 & 0.377 & 0.528 & 0.620 \\ 
  $W_{\hat H}$ & 0.190 & 0.298 & 0.625 & 0.811 & 0.146 & 0.193 & 0.278 & 0.386 \\ 
  $W_{\hat H_{\hat k}}$ & 0.291 & 0.532 & 0.850 & 0.943 & 0.237 & 0.364 & 0.488 & 0.649 \\ 
  $C_{\hat H}$ & 0.289 & 0.413 & 0.706 & 0.896 & 0.312 & 0.266 & 0.379 & 0.547 \\ 
  $C_{\hat H_{\hat k}}$ & 0.378 & 0.596 & 0.874 & 0.972 & 0.312 & 0.422 & 0.585 & 0.721 \\ 
  \midrule
  & \multicolumn{4}{c}{$H=0.8$} & \multicolumn{4}{c}{$H=0.9$}\\ 
 \cmidrule(lr){2-5} \cmidrule(lr){6-9}
 $n$ & 50  & 100 & 250 & 400 & 50  & 100 & 250 & 400 \\
  \midrule
  $S_{\hat H}$ & 0.115 & 0.125 & 0.156 & 0.185 & 0.096 & 0.105 & 0.142 & 0.163 \\ 
  $S_{\hat H_{\hat k}}$ & 0.208 & 0.265 & 0.287 & 0.296 & 0.160 & 0.213 & 0.208 & 0.234 \\ 
  $W_{\hat H}$ & 0.150 & 0.146 & 0.161 & 0.181 & 0.094 & 0.126 & 0.143 & 0.158 \\ 
  $W_{\hat H_{\hat k}}$ & 0.214 & 0.271 & 0.281 & 0.322 & 0.137 & 0.188 & 0.237 & 0.234 \\ 
  $C_{\hat H}$ & 0.405 & 0.321 & 0.270 & 0.305 & 0.539 & 0.442 & 0.367 & 0.328 \\ 
  $C_{\hat H_{\hat k}}$ & 0.313 & 0.321 & 0.383 & 0.429 & 0.418 & 0.350 & 0.311 & 0.313 \\ 
\midrule
\\
& \multicolumn{8}{c}{ Relative change position $\tau =0.5$}\\ 
\midrule
& \multicolumn{4}{c}{$H=0.6$} & \multicolumn{4}{c}{$H=0.7$}\\ 
 \cmidrule(lr){2-5} \cmidrule(lr){6-9}
 $n$ & 50  & 100 & 250 & 400 & 50  & 100 & 250 & 400 \\
  \midrule
$S_{\hat H}$ & 0.541 & 0.759 & 0.985 & 0.999 & 0.412 & 0.557 & 0.814 & 0.904 \\ 
   $S_{\hat H_{\hat k}}$ & 0.614 & 0.860 & 0.990 & 0.999 & 0.539 & 0.710 & 0.881 & 0.952 \\ 
  $W_{\hat H}$ & 0.594 & 0.811 & 0.984 & 0.999 & 0.441 & 0.564 & 0.809 & 0.902 \\ 
   $W_{\hat H_{\hat k}}$ & 0.609 & 0.877 & 0.991 & 1.000 & 0.550 & 0.717 & 0.878 & 0.950 \\ 
  $C_{\hat H}$ & 0.677 & 0.819 & 0.988 & 1.000 & 0.584 & 0.671 & 0.850 & 0.925 \\ 
   $C_{\hat H_{\hat k}}$ & 0.694 & 0.905 & 0.995 & 0.998 & 0.567 & 0.760 & 0.920 & 0.953 \\ 
  \midrule
  & \multicolumn{4}{c}{$H=0.8$} & \multicolumn{4}{c}{$H=0.9$}\\ 
 \cmidrule(lr){2-5} \cmidrule(lr){6-9}
  $S_{\hat H}$ & 0.373 & 0.403 & 0.535 & 0.640 & 0.337 & 0.428 & 0.472 & 0.549 \\ 
   $S_{\hat H_{\hat k}}$ & 0.454 & 0.563 & 0.649 & 0.711 & 0.412 & 0.480 & 0.582 & 0.604 \\ 
  $W_{\hat H}$ & 0.369 & 0.413 & 0.563 & 0.648 & 0.357 & 0.387 & 0.506 & 0.536 \\ 
   $W_{\hat H_{\hat k}}$ & 0.443 & 0.566 & 0.662 & 0.710 & 0.433 & 0.528 & 0.544 & 0.599 \\ 
  $C_{\hat H}$ & 0.576 & 0.584 & 0.655 & 0.706 & 0.659 & 0.622 & 0.631 & 0.637 \\ 
   $C_{\hat H_{\hat k}}$& 0.535 & 0.599 & 0.716 & 0.760 & 0.604 & 0.574 & 0.637 & 0.638 \\ 
   \bottomrule
\end{tabular}
\end{table*}

\subsection{Unknown Hurst coefficient}

In applications the true Hurst coefficient $H$ is unknown, and in the following we will consider two different estimators. The first is the local Whittle estimator (denoted by $\hat H$) with bandwidth parameter $m=\lfloor n^{2/3} \rfloor$, see \citet{Kuen}. However, if there is actually a change in the data, the local Whittle estimator is known to be biased. For the second estimator we therefore divide the observations into two subsamples
\begin{align*}
X_1, \dots ,X_{\hat k} \ \ \ \ \text{and} \ \ \ \ X_{\hat k +1}, \dots , X_n
\end{align*}
and estimate $H$ on each set, using again the local Whittle estimator. Finally the new estimator is given by $\hat H_{ \hat k} =  \hat k /n \hat H_1 + (n- \hat k)/n \hat H_2$. Here $\hat k$ is the natural change-point estimator, associated to each test. For example, in case of the Cram\'{e}r-von Mises test we use
\begin{align*}
\hat k = \min \left \{ 1 \leq k \leq n-1 \ \vert \ U_{k,n} = \max_{1\leq k \leq n-1} U_{k,n} \right \},
\end{align*}
where 
\vspace{-10pt}
\begin{align*}
U_{k,n} =  \int_{x \in \mathds R} \left ( \sum_{i=1}^k 1_{ \{ X_{i} \leq x \} } - \frac{k}{n} \sum_{i=1}^n 1_{ \{ X_{i} \leq x \} } \right)^2 \ d \hat F_n(x).
\end{align*}

\begin{table*}
\centering
\caption{Empirical Power, estimated Hurst coefficient, relative change position $\tau=0.5$, level shift of size $\mu=1$ and change in variance from $\sigma^2=1$ to $\sigma^2_0=5/4$, nominal size $\alpha=0.05$.}
\label{TablleEmpPowerVariance}
\begin{tabular}{l c c c c c c c c}
  \toprule
 & \multicolumn{4}{c}{$H=0.6$} & \multicolumn{4}{c}{$H=0.7$}\\ 
 \cmidrule(lr){2-5} \cmidrule(lr){6-9}
 $n$ & 50  & 100 & 250 & 400 & 50  & 100 & 250 & 400 \\
  \midrule
$S_{\hat H}$ & 0.701 & 0.931 & 1.000 & 1.000 & 0.606 & 0.772 & 0.963 & 0.996 \\ 
   $S_{\hat H_{\hat k}}$ & 0.878 & 0.986 & 1.000 & 1.000 & 0.817 & 0.973 & 0.999 & 1.000 \\ 
  $W_{\hat H}$ & 0.609 & 0.812 & 0.989 & 1.000 & 0.572 & 0.593 & 0.879 & 0.952 \\ 
   $W_{\hat H_{\hat k}}$ & 0.734 & 0.973 & 1.000 & 1.000 & 0.705 & 0.907 & 0.988 & 0.998 \\ 
  $C_{\hat H}$ & 0.660 & 0.899 & 0.999 & 1.000 & 0.529 & 0.724 & 0.938 & 0.983 \\ 
   $C_{\hat H_{\hat k}}$ & 0.588 & 0.916 & 1.000 & 1.000 & 0.507 & 0.806 & 0.983 & 0.998 \\ 
  \midrule
  & \multicolumn{4}{c}{$H=0.8$} & \multicolumn{4}{c}{$H=0.9$}\\ 
 \cmidrule(lr){2-5} \cmidrule(lr){6-9}
  $S_{\hat H}$ & 0.466 & 0.636 & 0.824 & 0.898 & 0.507 & 0.599 & 0.755 & 0.797 \\ 
   $S_{\hat H_{\hat k}}$ & 0.762 & 0.942 & 0.983 & 1.000 & 0.824 & 0.960 & 0.993 & 0.993 \\ 
  $W_{\hat H}$ & 0.597 & 0.568 & 0.669 & 0.727 & 0.718 & 0.645 & 0.582 & 0.634 \\ 
   $W_{\hat H_{\hat k}}$ & 0.616 & 0.850 & 0.944 & 0.981 & 0.562 & 0.823 & 0.926 & 0.953 \\ 
  $C_{\hat H}$ &  0.445 & 0.601 & 0.806 & 0.853 & 0.438 & 0.551 & 0.731 & 0.762 \\ 
   $C_{\hat H_{\hat k}}$& 0.443 & 0.635 & 0.852 & 0.937 & 0.460 & 0.576 & 0.781 & 0.847 \\ 
   \bottomrule
\end{tabular}
\end{table*}

\begin{table*}
\centering
\caption{Empirical Power, estimated Hurst coefficient, relative change position $\tau=0.5$,$G_1(x)=x^2$, $G_2(x) = x^2+x/2+1/2$, nominal size $\alpha=0.05$, $H$ is the Hurst coefficient of the underlying Gaussian.}
\label{TablleEmpPowerChiSquare}
\begin{tabular}{l c c c c c c c c}
  \toprule
 & \multicolumn{4}{c}{$H=0.6$} & \multicolumn{4}{c}{$H=0.7$}\\ 
 \cmidrule(lr){2-5} \cmidrule(lr){6-9}
 $n$ & 50  & 100 & 250 & 400 & 50  & 100 & 250 & 400 \\
  \midrule
 $S_{\hat H}$ & 0.535 & 0.827 & 0.983 & 0.999 & 0.487 & 0.758 & 0.957 & 0.988 \\ 
  $S_{\hat H_{\hat k}}$ & 0.494 & 0.815 & 0.992 & 0.999 & 0.479 & 0.750 & 0.968 & 0.995 \\ 
  $W_{\hat H}$ & 0.480 & 0.743 & 0.990 & 0.998 & 0.430 & 0.632 & 0.924 & 0.986 \\ 
  $W_{\hat H_{\hat k}}$ & 0.420 & 0.735 & 0.986 & 1.000 & 0.407 & 0.657 & 0.933 & 0.987 \\ 
  $C_{\hat H}$ & 0.424 & 0.616 & 0.853 & 0.958 & 0.399 & 0.547 & 0.745 & 0.858 \\ 
  $C_{\hat H_{\hat k}}$ & 0.387 & 0.569 & 0.828 & 0.920 & 0.390 & 0.546 & 0.755 & 0.891 \\ 
  \midrule
  & \multicolumn{4}{c}{$H=0.8$} & \multicolumn{4}{c}{$H=0.9$}\\ 
 \cmidrule(lr){2-5} \cmidrule(lr){6-9}
  $S_{\hat H}$  & 0.461 & 0.670 & 0.825 & 0.885 & 0.424 & 0.522 & 0.607 & 0.614 \\ 
   $S_{\hat H_{\hat k}}$ & 0.474 & 0.680 & 0.860 & 0.934 & 0.507 & 0.589 & 0.698 & 0.704 \\ 
  $W_{\hat H}$  & 0.376 & 0.537 & 0.670 & 0.773 & 0.350 & 0.369 & 0.443 & 0.438 \\ 
   $W_{\hat H_{\hat k}}$ & 0.418 & 0.555 & 0.738 & 0.828 & 0.458 & 0.496 & 0.488 & 0.511 \\ 
  $C_{\hat H}$  & 0.374 & 0.464 & 0.564 & 0.596 & 0.384 & 0.352 & 0.343 & 0.352 \\ 
    $C_{\hat H_{\hat k}}$ & 0.440 & 0.538 & 0.655 & 0.772 & 0.397 & 0.491 & 0.562 & 0.579 \\ 
   \bottomrule
\end{tabular}
\end{table*}

Consistency of this estimator was shown in \citet{NiHaWyZh}. \citet{HoKo} verified consistency for the analogous CUSUM-based estimator.

Empirical size and empirical power of the tests under unknown $H$ are displayed in tables \ref{TabelleEmpSizeEstimatedCoefficient} and \ref{TablleEmpPower021}. Let us first compare the impact of the different estimators $\hat H$ and $\hat H_{\hat k}$ on the finite sample performance of the Cram\'{e}r-von Mises test. If we use the classical local Whittle estimator, the empirical size of the test is quite accurate and even matches the nominal size for $n=400$ and $H \leq 0.8$. However, there is a loss in the empirical power. The power performance is much better, if the local Whittle estimator is modified. Actually there is no loss in power if compared to the case where $H$ was assumed to be known. Then again, the probability of a false rejection is higher than $\alpha =0.05$, so the test is quite liberal.

Next we compare Cram\'{e}r-von Mises, Wilcoxon and CUSUM test. The empirical size of the three tests is similar, no matter which estimator we choose and which situation we assume (sample size, Hurst coefficient), see Table \ref{TabelleEmpSizeEstimatedCoefficient}. 

In terms of empirical power the Cram\'{e}r-von Mises and Wilcoxon test give similar results with the CUSUM test being slightly ahead for $\tau =1/2$ and being clearly advantageous for early changes $\tau =1/5$ (see Table \ref{TablleEmpPower021}). 

We have to keep in mind that CUSUM and Wilcoxon test are designed to detect changes in the mean. On the contrary, the Cram\'{e}r-von Mises test is a so called omnibus test and has power against arbitrary changes in the marginal distribution. 

Therefore, we consider another situation, with the mean-shift being now accompanied by a small change in the variance. In detail, 
\begin{align*}
Y_i = \begin{cases}
X_i & \text{for  } i \leq \lfloor n \tau \rfloor, \\
\sigma X_i + \mu & \text{for  } i > \lfloor n \tau \rfloor,
\end{cases}
\end{align*}
for Gaussian $\{X_i\}_{i \geq 1}$. The theoretic result from Example \ref{AREMeanVarChange} indicates that in this scenario the Cram\'{e}r-von Mises test should be advantageous. In fact, for all combinations of sample size $n$ and Hurst coefficient $H$ the empirical power against this change is always higher than the power against a mean-shift under constant variance. Moreover, the Cram\'{e}r-von Mises test has clearly higher power then CUSUM and Wilcoxon test, which matches the theoretical findings of Example \ref{AREMeanVarChange}.

Moreover, we consider the change-point problem (based on non-monotone transformations)
\begin{align*}
Y_i = \begin{cases}
X_i^2 & \text{for  } i \leq \lfloor n \tau \rfloor, \\
 X_i^2 +aX_i + \mu & \text{for  } i > \lfloor n \tau \rfloor,
\end{cases}
\end{align*}
corresponding to a situation in which mean, variance, skewness and the Hermite rank change (see Example \ref{BeispielKomplizierterLimes}). Table \ref{TablleEmpPowerChiSquare} displays the empirical power of the three tests against this alternative and the picture is quite clear. The Cram\'{e}r-von Mises test has the highest power for all combinations of $H$ and $n$, while the Wilcoxon test is second best. Also note that the Hermite rank of the pre-change random variables is $m=2$. Consequently, these observations are short-range dependent for $H < 0.75$. 

\subsection{$farima(0,d,0)$-processes}

\begin{table*}
\centering
\caption{Empirical size and power for $farima(0,0.2,0)$-sequences, Hurst coefficient is estimated, nominal size $\alpha=0.05$.}
\label{TabelleEmpSizeEstimatedCoefficientFarima}
\begin{tabular}{l c c c c c c c c}
  \toprule
 & \multicolumn{4}{c}{No change} & \multicolumn{4}{c}{Mean-shift $\mu=1$}\\ 
 \cmidrule(lr){2-5} \cmidrule(lr){6-9}
 $n$ & 50  & 100 & 250 & 400 & 50  & 100 & 250 & 400 \\
  \midrule
$S_{\hat H}$ & 0.036 & 0.058 & 0.069 & 0.056 & 0.276 & 0.490 & 0.832 & 0.920 \\ 
   $S_{\hat H_{\hat k}}$ & 0.167 & 0.148 & 0.119 & 0.129 & 0.520 & 0.737 & 0.939 & 0.986 \\ 
  $W_{\hat H}$ & 0.067 & 0.139 & 0.086 & 0.074 & 0.601 & 0.882 & 0.968 & 0.730 \\ 
   $W_{\hat H_{\hat k}}$ & 0.247 & 0.189 & 0.160 & 0.153 & 0.573 & 0.711 & 0.934 & 0.980\\ 
  $C_{\hat H}$ & 0.104 & 0.061 & 0.053 & 0.048 &0.281 & 0.479 & 0.836 & 0.945  \\ 
   $C_{\hat H_{\hat k}}$ & 0.212 & 0.202 & 0.158 & 0.114 & 0.499 & 0.682 & 0.937 & 0.977 \\ 
\bottomrule
\end{tabular}
\end{table*}

For Gaussian long memory processes beyond fractional Gaussian noise, not only the Hurst coefficient determines the normalization. Instead it is given by \begin{align*}
d_n=n^HL^{1/2}(n) (H(2H-1))^{1/2}),
\end{align*}
see (\ref{DefinitionNormalizationBlau}). In this study we assume, as $n \to \infty$,  $L(n) \rightarrow C$, which is quite common in the literature. For fractional Gaussian noise, $C=H(2H-1)$ so the two factors just cancel out. In general the constant $C$ is given through the limit
\begin{align*}
\rho(k) k^{2-2H} \rightarrow C,
\end{align*}
as $k \to \infty$. We suggest an estimator for $C$ (which is quite heuristic) by:
\begin{align}
\hat C = \frac 1 K \sum_{k=1}^K \hat \rho(k) k^{2-2 \hat H}, \label{SchaetzerKonstante}
\end{align}
with $\hat H$ being one of the two estimators from above. Finally, we use the normalization
\begin{align*}
\hat d_n = n^{\hat H}\hat C^{1/2}(\hat H(2 \hat H-1))^{1/2}.
\end{align*}
The estimator $\hat C$ in (\ref{SchaetzerKonstante}) is only defined under long memory, that is $H>0.5$ (or in this situation $\hat H>0.5$). Therefore, we modify both estimators by considering $\max(\hat H, 0.501)$ instead of $\hat H$. The effect of this modification on short memory processes will be seen in the next section.

However, for $farima(0,d,0)$-sequences it seems to work quite well, see Table \ref{TabelleEmpSizeEstimatedCoefficientFarima}. Note that critical values are still deduced from fractional Gaussian noise. The finite sample performance (under the hypothesis as well as under a mean-shift) is very similar to the case where the data comes from fractional Gaussian noise. Meaning, the Cram\'{e}r-von Mises test has good properties and the different tests yield very similar results, again matching the theoretic findings. 

\subsection{Short-range dependent effects}

\begin{table*}
\centering
\caption{Empirical size and power for $farima(1,0.2,0)$-sequences with AR-coefficient $a_1=0.4$, Hurst coefficient is estimated, nominal size $\alpha=0.05$.}
\label{TabelleEmpSizeEstimatedCoefficientFarima1}
\begin{tabular}{l c c c c c c c c}
  \toprule
 & \multicolumn{4}{c}{No change} & \multicolumn{4}{c}{Mean-shift $\mu=1$}\\ 
 \cmidrule(lr){2-5} \cmidrule(lr){6-9}
 $n$ & 50  & 100 & 250 & 400 & 50  & 100 & 250 & 400 \\
  \midrule
$S_{\hat H}$ & 0.032 & 0.021 & 0.030 & 0.034 & 0.154 & 0.183 & 0.329 & 0.433 \\ 
   $S_{\hat H_{\hat k}}$ & 0.126 & 0.065 & 0.062 & 0.052 & 0.316 & 0.325 & 0.415 & 0.489 \\ 
  $W_{\hat H}$ & 0.038 & 0.009 & 0.003 & 0.007 & 0.157 & 0.158 & 0.192 & 0.266 \\ 
   $W_{\hat H_{\hat k}}$ & 0.183 & 0.048 & 0.015 & 0.017 & 0.363 & 0.313 & 0.301 & 0.387 \\ 
  $C_{\hat H}$ & 0.445 & 0.288 & 0.124 & 0.081 & 0.625 & 0.615 & 0.605 & 0.689 \\ 
   $C_{\hat H_{\hat k}}$ & 0.422 & 0.302 & 0.138 & 0.093 & 0.592 & 0.613 & 0.656 & 0.710 \\ 
\bottomrule
\end{tabular}
\end{table*}

\begin{table*}
\centering
\caption{Empirical size and power for $AR(1)$-sequences with AR-coefficient $a_1=0.6$, Hurst coefficient is estimated, nominal size $\alpha=0.05$.}
\label{TabelleEmpSizeEstimatedCoefficientAR1}
\begin{tabular}{l c c c c c c c c}
  \toprule
 & \multicolumn{4}{c}{No change} & \multicolumn{4}{c}{Mean-shift $\mu=1$}\\ 
 \cmidrule(lr){2-5} \cmidrule(lr){6-9}
 $n$ & 50  & 100 & 250 & 400 & 50  & 100 & 250 & 400 \\
  \midrule
$S_{\hat H}$ & 0.016 & 0.008 & 0.004 & 0.002 & 0.135 & 0.150 & 0.324 & 0.576 \\ 
  $S_{\hat H_{\hat k}}$ & 0.097 & 0.030 & 0.012 & 0.004 & 0.282 & 0.260 & 0.374 & 0.556 \\ 
  $W_{\hat H}$ & 0.036 & 0.004 & 0.000 & 0.000 & 0.149 & 0.106 & 0.106 & 0.213 \\ 
  $W_{\hat H_{\hat k}}$ & 0.141 & 0.018 & 0.000 & 0.000 & 0.336 & 0.249 & 0.235 & 0.343 \\ 
  $C_{\hat H}$ & 0.467 & 0.279 & 0.017 & 0.006 & 0.641 & 0.592 & 0.606 & 0.721 \\ 
  $C_{\hat H_{\hat k}}$ & 0.412 & 0.219 & 0.026 & 0.006 & 0.654 & 0.631 & 0.678 & 0.794 \\ 
\bottomrule
\end{tabular}
\end{table*}

Finally, we have considered deviations from purely LRD sequences by simulating $farima(1,d,0)$-time series and short memory AR($1$)-processes. 

First, we have applied the tests to $farima(0,d,1)$-sequences, which are still long-range dependent. Table \ref{TabelleEmpSizeEstimatedCoefficientFarima1} indicates that the empirical power of the Cram\'{e}r-von Mises test is less than in the case of $farima(0,d,0)$-processes. However, the test works principally well, meaning that the power increases with the number of observations while the empirical size stays close to the nominal size. For CUSUM and Wilcoxon test this seems to be not the case.
\\\\
For the (purely short-range dependent) AR($1$)-processes we make two observations: First, due to the assumption of LRD ($H> 0.5$) the normalization is too strong and the statistics converge to $0$, at least under stationarity. If the structural change is big enough, the tests might still detect the change (see Table \ref{TabelleEmpSizeEstimatedCoefficientAR1}). However, there is a certain loss in power. 

Secondly, Cram\'{e}r-von Mises test and CUSUM test are showing a quite different finite sample performance. While under LRD (in concordance with the theory) their empirical size and power is always very similar, we now observe situations where the Cram\'{e}r-von Mises test has empirical power $0.374$ and the CUSUM test $0.678$, see the results in Table \ref{TabelleEmpSizeEstimatedCoefficientAR1}. Again, this matches the theoretical fact that under short memory the tests show a different asymptotic behavior.

\section{Proofs of the main results} \label{SectionProofs}

\subsection{Proof of Theorem \ref{WeakRedPrinArray} and Corollary \ref{KonvEmpProArray}}

It is the goal to approximate the sequential empirical process by a linear combination of multiple partial sum processes. The indicator function $1_{\{G_n(X_j) \leq x \}}$ has the Hermite expansion 
\begin{align*}
1_{\{G_n(X_j) \leq x \}} = \sum_{q=0}^{\infty} \frac {J_{q,n}(x)} {q!} H_q(X_j).
\end{align*}
Remind that $J_{q,n}(x) = E [1_{\{G_n(X_j) \leq x \}} H_q(X_j)]$ and especially $J_{0,n}(x) = P(G_n(X_j) \leq x) = F_{(n)}(x)$. Now let $L_{m,n,j}(x)$ be the Hermite expansion up to $m$, in detail
\begin{align}
L_{m,n,j}(x) = \sum_{q=0}^{m} \frac {J_{q,n}(x)} {q!} H_q(X_j). \label{HermiteExpansionIndicatorRed}
\end{align}
Let $m(n)$ be the Hermite rank of $(1_{\{G_n(X_j) \leq x \}})_x$. Then we have by the conditions of Theorem \ref{WeakRedPrinArray} that $m^{\ast} \leq m(n)\leq m$ for some $m^{\ast} \leq m < 1/D$. Thus
\begin{align*}
L_{m,n,j}(x) = F_{(n)}(x) + \sum_{q=m^{\ast}}^{m} J_{q,n}(x) /q! H_q(X_j).
\end{align*}
Moreover, define
\begin{align*}
S_n(l;x) = \frac 1 {d_{n,m}} \sum_{j=1}^l \left ( 1_{\{G_n(X_j) \leq x \}} - L_{m,n,j}(x) \right ).
\end{align*}
Finally, let $S_n(k;x,y)= S_n(k;y) -S_n(k;x)$, $L_{m,n,j}(x,y) = L_{m,n,j} (y) - L_{m,n,j}(x)$ and $J_{n,q}(x,y) = J_{n,q}(y)-J_{n,q}(x)$.

We will make use of the chaining technique of \citet{DeTa}. To this end, define
\begin{align*}
\Lambda_n (x) :=  \int_{\{G_n(s)\leq x\}} \left (\sum_{q=0}^m \frac {\lvert H_{q}(s) \rvert} {q!} \right ) \phi(s) \ ds
\end{align*}
and observe that $J_{q,n}(x,y)/q!$ is bounded by $\Lambda_n(x,y) = \Lambda_n(y)- \Lambda_n(x)$, for all $n \in \mathds N$ and all $q=0, \dots , m$. Furthermore, $\Lambda_n$ is monotone, $\Lambda_n(-\infty)=0$ and
\begin{align*}
\Lambda_n(+\infty) =   \int_{\mathds R} \left (\sum_{q=0}^m \frac {\lvert H_{q}(s) \rvert} {q!} \right ) \phi(s) \ ds = C < \infty, \ \ \text{for all } n \in \mathds N.
\end{align*} 
Define partitions, similarly to \citet{DeTa}, but now depending on $n$, by
\begin{align*}
x_i(k) = x_i^{(n)}(k) = \inf \{ x \vert \Lambda_n(x) \geq \Lambda_n(+\infty)i2^{-k} \} \ \ i=0, \cdots, 2^k-1
\end{align*}
for $k=0, \cdots , K$, with the integer $K$ chosen below. Then we have
\begin{align}
\Lambda_n(x_i(k)-) - \Lambda_n(x_{i-1}(k)) \leq \Lambda_n(+\infty)2^{-k}. \label{AbschPartition}
\end{align}
Note that the right hand side of (\ref{AbschPartition}) does not depend on $n$. \\
Based on these partitions we can define chaining points $i_k(x)$ by
\begin{align*}
x_{i_k(x)}(k) \leq x < x_{i_k(x)+1}(k),
\end{align*}
 for each $x$ and each $k \in \{0,1, \dots , K\}$, see \citet{DeTa}.

\begin{Lem}  \label{BeweisStrukturDefinierendesLemma} Define the chaining points as above. Suppose the following two conditions hold:
\begin{enumerate}
\item [(i)] There are constants $\gamma >0$ and $C>0$, not depending on $n$, such that for all $k \leq n$
\begin{align*}
E \lvert S_n(k;x,y) \rvert ^2 \leq C \left ( \frac k n \right ) n^{-\gamma} F_{(n)}(x,y).
\end{align*} 
\item[(ii)] For all $\epsilon >0$ and all $n \in \mathds N$ there is a real number $K=K(n,\epsilon)$, such that for all $\lambda >0$
\begin{align*}
P \left ( \sup_{x \in \mathds R} \left \lvert  \frac 1 {d_{n,m}} \sum_{j=1}^l L_{m,n,j}(x_{i_K(x)}(K),x_{i_K(x)+1}(K)-) \right \rvert > \epsilon \right ) \leq C \left ( \frac l n \right )^{2-m^{\ast}D} n^{\lambda - m^{\ast}D}.
\end{align*}
\end{enumerate}
Then there is a constant $\rho >0$, such that for all $n \in \mathds N$ and all $\epsilon >0$ the following holds:
\begin{align*}
P \left ( \sup_{x} \lvert S_n(l;x) \rvert > \epsilon \right )  \leq C \left ( \frac l n \right) n^{-\gamma} \epsilon^{-2} (K(n,\epsilon)+ 3)^5 + C \left ( \frac l n \right)^{2-m^{\ast}D} n^{\lambda -m^{\ast}D}.
 \end{align*}
\end{Lem}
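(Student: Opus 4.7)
My plan is to run the Dehling-Taqqu chaining argument over the nested partitions $\{x_i(k)\}_i$, $k=0,\dots,K$, that are defined just before the lemma. For each $x$ I would use the telescoping identity
\begin{align*}
S_n(l;x) = S_n(l;x_0(0)) + \sum_{k=1}^{K} \bigl[ S_n(l;x_{i_k(x)}(k)) - S_n(l;x_{i_{k-1}(x)}(k-1)) \bigr] + \bigl[ S_n(l;x) - S_n(l;x_{i_K(x)}(K)) \bigr],
\end{align*}
so that $\sup_x|S_n(l;x)|$ splits into a ``chain part'' (finitely many level-by-level pair increments $S_n(l;x_{i-1}(k-1), x_i(k))$, of which there are at most $2^k$ at level $k$) and a ``residual'' below the finest level.

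For the chain part I would invoke hypothesis (i). Since $F_{(n)}(x,y) = J_{0,n}(x,y) \leq \Lambda_n(x,y)$, inequality (\ref{AbschPartition}) bounds $F_{(n)}$ of every consecutive-level pair by $C 2^{-k}$, so (i) yields $E|S_n(l;\text{pair})|^2 \leq C(l/n)n^{-\gamma}2^{-k}$. Markov plus a union bound over the $2^k$ pairs at level $k$ gives
\begin{align*}
P\Bigl(\max_{\text{level-}k\ \text{pairs}}|S_n(l;\cdot)| > \epsilon_k\Bigr) \leq \frac{2^k \cdot C(l/n)n^{-\gamma}2^{-k}}{\epsilon_k^2} = \frac{C(l/n)n^{-\gamma}}{\epsilon_k^2}.
\end{align*}
Choosing $\epsilon_k = c\,\epsilon/(k+2)^2$ with $c$ small enough that $\sum_{k=0}^K \epsilon_k \leq \epsilon/2$ and summing over $k$ produces a total of order $C(l/n)n^{-\gamma}\epsilon^{-2}(K+3)^5$, which matches the first term on the right-hand side of the claim.

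For the residual $S_n(l;x_{i_K(x)}(K), x)$ I would split it into an indicator-difference piece and an expansion-difference piece. Monotonicity of indicators gives
\begin{align*}
0 \leq \sum_{j=1}^{l}\bigl(1_{\{G_n(X_j)\leq x\}} - 1_{\{G_n(X_j)\leq x_{i_K(x)}(K)\}}\bigr) \leq \sum_{j=1}^{l}\bigl(1_{\{G_n(X_j)\leq x_{i_K(x)+1}(K)-\}} - 1_{\{G_n(X_j)\leq x_{i_K(x)}(K)\}}\bigr),
\end{align*}
and after adding and subtracting $\sum_j L_{m,n,j}$ on the endpoint interval the right-hand side becomes a level-$K$ pair increment of $S_n$ (handled exactly as in the chain step) plus the partial sum $\sum_j L_{m,n,j}(x_{i_K(x)}(K), x_{i_K(x)+1}(K)-)$, which is controlled by hypothesis (ii). The genuinely delicate piece is the expansion residual $d_{n,m}^{-1}\sum_j L_{m,n,j}(x_{i_K(x)}(K), x)$: because the truncated Hermite expansion is not monotone in $x$, one cannot sandwich it by its value at a partition endpoint. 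My plan is to dominate $|J_{q,n}(x_{i_K(x)}(K), x)|/q!$ by $\Lambda_n(x_{i_K(x)}(K), x_{i_K(x)+1}(K)-)$ using monotonicity of $\Lambda_n$, and then re-run the moment computation that underlies (ii) to obtain the same $(l/n)^{2-m^{\ast}D}n^{\lambda-m^{\ast}D}$ rate. With $K = K(n,\epsilon)$ chosen from (ii) the two contributions combine to the stated inequality; the expansion residual is the main obstacle, and a secondary bookkeeping point is the quadratic weight $(k+2)^{-2}$ needed to recover the factor $(K+3)^5$ rather than a smaller polynomial in $K$.
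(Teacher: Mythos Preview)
Your proposal is correct and follows the same chaining architecture as the paper: telescope along the nested partitions, control each level-$k$ increment by Markov plus hypothesis~(i) with weights $\epsilon_k = c\,\epsilon/(k+2)^2$, and sum to produce the factor $(K+3)^5$. This part matches the paper essentially line for line.

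The only substantive difference is in how the final residual $S_n(l;x_{i_K(x)}(K),x)$ is treated. You split it into an indicator piece (sandwiched between $0$ and the endpoint indicator, yielding a level-$K$ pair increment plus an $L$-sum at the endpoints) and a separate ``expansion residual'' $d_{n,m}^{-1}\sum_j L_{m,n,j}(x_{i_K(x)}(K),x)$ at the intermediate point $x$, which you then propose to bound by dominating $|J_{q,n}|/q!$ via $\Lambda_n$ and re-running the argument behind~(ii). The paper instead collapses the whole residual in one stroke via the inequality
\[
|S_n(l;a,x)| \;\leq\; |S_n(l;a,b-)| \;+\; \frac{2}{d_{n,m}}\Bigl|\sum_{j\leq l} L_{m,n,j}(a,b-)\Bigr|,
\]
with $a=x_{i_K(x)}(K)$ and $b=x_{i_K(x)+1}(K)$, so that only endpoint quantities appear and hypotheses~(i) and~(ii) apply directly. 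Your route is slightly more explicit but, strictly speaking, steps outside the lemma's hypotheses: (ii) is stated only for the partition-endpoint intervals, so ``re-running the moment computation that underlies~(ii)'' borrows from the proof of Lemma~\ref{ResttermChaining} rather than from~(ii) itself. The paper's single inequality avoids this by never needing $L_{m,n,j}(a,x)$ at an interior $x$. Both arguments are sound; the paper's is just more self-contained relative to the stated hypotheses.
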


\begin{proof}
Due to definition of the chaining points each point $x$ is linked to $- \infty$ in detail
\begin{align*}
- \infty = x_{i_0(x)}(0) \leq x_{i_1(x)}(1) \leq \dots \leq x_{i_K(x)}(K) \leq x < x_{i_K(x)+1}(K)
\end{align*}
We have
\begin{align}
S_n(l;x) = & \ \sum_{k=1}^K S_n(l;x_{i_{k-1}(x)}(k-1),x_{i_k(x)}(k)) \ +  \ S_n(l;x_{i_K(x)}(K),x). \label{DazwischenOrange}
\end{align} 
The last summand of the right hand side of (\ref{DazwischenOrange}) can be treated as follows
\begin{equation}
\begin{aligned}
\left \lvert S_n(l;x_{i_K(x)}(K),x) \right \rvert = & \ \frac 1 {d_{n,m}} \left \lvert \sum_{j=1}^l \bigg ( 1_{\{ x_{i_K(x)}(K) < G_n(X_j) \leq x \} } - L_{m,n,j}(x_{i_K(x)}(K),x) \bigg)  \right \rvert \\
\leq & \ \frac 1 {d_{n,m}} \left \lvert \sum_{j=1}^l \bigg ( 1_{\{ x_{i_K(x)}(K) < G_n(X_j) <  x_{i_K(x)+1}(K) \} } - L_{m,n,j}(x_{i_K(x)}(K),x_{i_K(x)+1}(K)-) \bigg) \right \rvert \\
& \ + 2 \frac 1 {d_{n,m}} \left \lvert \sum_{j=1}^l L_{m,n,j}(x_{i_K(x)}(K),x_{i_K(x)+1}(K)-) \right \rvert \\
= & \ \left \lvert S_n(l;x_{i_K(x)}(K),x_{i_K(x)+1}(K)-) \right \rvert  \\
& \ + 2 \frac 1 {d_{n,m}} \left \lvert \sum_{j=1}^l L_{m,n,j}(x_{i_K(x)}(K),x_{i_K(x)+1}(K)-) \right \rvert.
\end{aligned} \label{ChainingNeuGrau-Braun}
\end{equation}
By (\ref{DazwischenOrange}) and (\ref{ChainingNeuGrau-Braun}) we get, using $\sum_{k=1}^{\infty} (k+2)^{-2} < 1/2$,
\begin{align}
\nonumber & \ P \left ( \sup_{x} \lvert S_n(l;x) \rvert > \epsilon \right ) \\
\nonumber \leq & \ P \left ( \sup_{x} \lvert S_n(l;x) \rvert > \epsilon \sum_{k=1}^{K+1}(k+2)^{-2} + \epsilon/2 \right ) \\
\leq & \ \sum_{k=1}^K P \left ( \max_{x} \lvert S_n(l;x_{i_{k-1}(x)}(k-1),x_{i_k(x)}(k)) \rvert > \epsilon /(k+2)^2 \right ) \label{ChainingProb1} \\
 & \ + P \left ( \max_{x} \lvert S_n(l;x_{i_{K}(x)}(K),x_{i_K(x)+1}(K)-) \rvert > \epsilon /(K+3)^2 \right ) \label{ChainingProb2} 
 \\
 & \ + P \left ( 2 d_{n,m}^{-1} \left \lvert \sum_{j \leq l} L_{m,n,j}(x_{i_K(x)}(K),x_{i_K(x)+1}(K)-) \right \rvert > (\epsilon/2) \right ). \label{ChainingProb3}
\end{align}
Further, by condition (i) of Lemma \ref{BeweisStrukturDefinierendesLemma} and the Markov inequality we get
\begin{align}
\nonumber & \ P  \left ( \max_{x} \lvert S_n(l;x_{i_k(x)}(k),x_{i_{k+1}(x)}(k+1)) \rvert > \epsilon /(k+2)^2 \right ) \\
\nonumber \leq & \ \sum_{i=0}^{2^{k+1}-1} P \left ( S_n(l;x_i(k+1),x_{i+1}(k+1))  > \epsilon /(k+2)^2 \right ) \\
\leq & \ C \sum_{i=0}^{2^{k+1}-1}  \left ( \frac l n \right ) n^{-\gamma} \frac {(k+2)^4} {\epsilon^2} F_{(n)} (x_i(k+1), x_{i+1}(k+1)) \label{ChainingMarkovBlau} \\
\nonumber \leq & \ C  \left ( \frac l n \right ) n^{-\gamma} \frac {(k+2)^4} {\epsilon^2}.
\end{align}
The constant $C$ in (\ref{ChainingMarkovBlau}) is the constant of condition (i) in Lemma \ref{BeweisStrukturDefinierendesLemma} and thus independent of $n$. In the next line this $C$ gets multiplied with $\Lambda_n(+\infty)$, which is a constant by itself. Thus the $C$ in the inequality above is a universal constant, not depending on $n$. The same is true for $\gamma$. \\
Using the same arguments we get moreover 
\begin{align*}
P  \left ( \max_{x} \lvert S_n(l;x_{i_K(x)}(K),x_{i_K(x)+1}(K)-) \rvert > \epsilon /(K+3)^2 \right ) \leq C  \left ( \frac l n \right ) n^{-\gamma} \frac {(K+3)^4} {\epsilon^2}.
\end{align*}
Finally we have by condition (ii) of Lemma \ref{BeweisStrukturDefinierendesLemma}
\begin{align*}
P \left ( 2 d_{n,m}^{-1} \left \lvert \sum_{j \leq l} L_{m,n,j}(x_{i_K(x)}(K),x_{i_K(x)+1}(K)-) \right \rvert > (\epsilon/2) \right ) \leq C \left ( \frac l n \right )^{2-m^{\ast}D} n^{\lambda - m^{\ast}D},
\end{align*}
for all $\lambda >0$. Combining the estimates for (\ref{ChainingProb1}), (\ref{ChainingProb2}) and (\ref{ChainingProb3}) we arrive at
\begin{align*}
   P \left ( \sup_{x} \lvert S_n(l;x) \rvert > \epsilon \right ) \leq & \ C \left ( \frac l n \right) n^{-\gamma} \epsilon^{-2}\sum_{k=1}^{K+1} (k+ 2)^4 + C \left ( \frac l n \right)^{2-m^{\ast}D} n^{\lambda -m^{\ast}D} \\
   \leq & \ C \left ( \frac l n \right) n^{-\gamma} \epsilon^{-2} (K+ 3)^5 + C \left ( \frac l n \right)^{2-m^{\ast}D} n^{\lambda -m^{\ast}D}.
  \end{align*}
which finishes the proof.
 \end{proof}

\begin{Lem} \label{MomentUnglRedPrin2}
There exist constants $\gamma$ and $C$, not depending on n, such that for all $k \leq n$
\begin{align*}
E \lvert S_n(k;x,y) \rvert ^2 \leq C \left ( \frac k n \right ) n^{-\gamma} F_{(n)}(x,y).
\end{align*}
\end{Lem}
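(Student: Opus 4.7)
The strategy is to work with the Hermite expansion of the residual $1_{\{G_n(X_j)\leq x\}}-L_{m,n,j}(x)$, exploit orthogonality of the Hermite polynomials under a standard normal, and then use the comparison $|\rho(h)|^q\le|\rho(h)|^{m+1}$ to reduce everything to a single sum over lags.

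First, since $L_{m,n,j}(x)$ is exactly the partial sum of the Hermite expansion of $1_{\{G_n(X_j)\le x\}}$ up to order $m$, the increment can be written
\begin{equation*}
1_{\{x<G_n(X_j)\le y\}}-L_{m,n,j}(x,y)=\sum_{q=m+1}^{\infty}\frac{J_{q,n}(x,y)}{q!}H_q(X_j).
\end{equation*}
Using $EH_p(X_i)H_q(X_j)=\delta_{pq}\,q!\,\rho(i-j)^q$, the different Hermite orders are orthogonal, and after squaring and summing over $j$ one obtains
\begin{equation*}
E|S_n(k;x,y)|^2=\frac{1}{d_{n,m}^2}\sum_{q=m+1}^{\infty}\frac{J_{q,n}^2(x,y)}{(q!)^2}\sum_{i,j=1}^k q!\,\rho(i-j)^q=\frac{1}{d_{n,m}^2}\sum_{q=m+1}^{\infty}\frac{J_{q,n}^2(x,y)}{q!}\cdot\frac{1}{q!}\sum_{i,j=1}^k q!\,\rho(i-j)^q.
\end{equation*}
The crucial observation is that $|\rho(h)|\le1$ and $q\ge m+1$ imply $|\rho(h)^q|\le|\rho(h)|^{m+1}$, which yields the uniform-in-$q$ bound
\begin{equation*}
\frac{1}{q!}\sum_{i,j=1}^k q!\,\rho(i-j)^q=\sum_{i,j=1}^k\rho(i-j)^q\le \sum_{i,j=1}^k|\rho(i-j)|^{m+1}=:R_k.
\end{equation*}

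Next I would invoke Parseval to collapse the sum over $q$: since $\{H_q/\sqrt{q!}\}$ is an orthonormal basis of $L^2(\phi)$,
\begin{equation*}
\sum_{q=0}^{\infty}\frac{J_{q,n}^2(x,y)}{q!}=E\bigl[1_{\{x<G_n(X_1)\le y\}}^2\bigr]=F_{(n)}(x,y),
\end{equation*}
so in particular the tail sum from $q=m+1$ is bounded by $F_{(n)}(x,y)$. Combining with the previous step gives
\begin{equation*}
E|S_n(k;x,y)|^2\le\frac{R_k}{d_{n,m}^2}\,F_{(n)}(x,y).
\end{equation*}
It remains to show $R_k/d_{n,m}^2\le C(k/n)n^{-\gamma}$ for some $\gamma>0$ not depending on $n$.

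This last step is the main obstacle and is a routine but careful regular-variation calculation. Using $\rho(h)\sim h^{-D}L(h)$ together with the standard asymptotics (as in Theorem~3.1 of \citet{Taq}), one has, uniformly in $k\le n$,
\begin{equation*}
R_k\le C\,k^{\max(2-(m+1)D,\,1)}L^{m+1}(k)\,(\log k)^{\mathbf 1_{\{(m+1)D=1\}}},\qquad d_{n,m}^2\sim c\,n^{2-mD}L^m(n).
\end{equation*}
I would then split into the two cases $(m+1)D<1$ and $(m+1)D\ge1$. In the first case, $R_k/d_{n,m}^2$ contains the factor $k^{1-(m+1)D}/n^{1-mD}\le n^{-D}$, leaving a power of $n^{-D}$ to spare. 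In the second case, $R_k\le Ck\,L^{m+1}(k)\log k$, so $R_k/d_{n,m}^2$ has the factor $n^{-(1-mD)}$ to spare, which is positive because $mD<1$ by Assumption~A1. The remaining slowly varying ratios $L(k)/L(n)$ are handled by Potter's bound: for any $\delta>0$ and $k\le n$, $L(k)\le C\,L(n)(n/k)^{\delta}$, and all such $(n/k)^{\delta}$ and $L(n)$ factors are absorbed into $n^{-\gamma}$ at the cost of replacing $\gamma$ by a slightly smaller but still positive exponent (any $\gamma<\min(D,1-mD)$ works). This yields the claimed bound with a universal constant $C$ and exponent $\gamma>0$ independent of $n$.
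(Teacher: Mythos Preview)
Your proposal is correct and follows essentially the same route as the paper: Hermite-expand the residual, use $|\rho(h)|^q\le|\rho(h)|^{m+1}$ for $q\ge m+1$, bound $\sum_q J_{q,n}^2(x,y)/q!$ by $F_{(n)}(x,y)$ via Parseval, and then control $d_{n,m}^{-2}\sum_{i,j\le k}|\rho(i-j)|^{m+1}$ by a regular-variation estimate. The paper simply cites page~1777 of \citet{DeTa} for the last step, whereas you spell out the case split and Potter's bound, but the argument is the same.
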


The proof is very close to the proof of Lemma 3.1 in \citet{DeTa}. However, for further results it is crucial that $C$ and $\gamma$ only depend indirectly on the function $G_n$, namely through the Hermite rank. Thus we give a detailed proof to highlight this fact.

\begin{proof}
First, obtain the Hermite expansion
\begin{align*}
1_{ \{ x < G_n(X_i) \leq y \} } - F_{(n)} (x,y) = \sum_{q=m^{\ast}}^{\infty} \frac {J_{q,n}(x,y)} {q!} H_q(X_i).
\end{align*}
Secondly, we have by orthogonality of the $H_q(X_i)$ and $E H^2_q(X_i)=q!$
\begin{align*}
\sum_{q=m^{\ast}}^{\infty} \frac {J_{q,n}^2(x,y)} {q!} & \ = \sum_{q=m^{\ast}}^{\infty} E \left ( \frac {J_{q,n}(x,y)} {q!}  H_q(X_i) \right )^2 \\
& \ = E \left ( \sum_{q=m^{\ast}}^{\infty} \frac {J_{q,n}(x,y)} {q!} H_q(X_i) \right )^2 \\
& \ = E \left ( 1_{ \{ x < G_n(X_i) \leq y \} } - F_{(n)} (x,y) \right )^2 \\
& \ = F_{(n)}(x,y)(1- F_{(n)}(x,y)) \\
& \ \leq F_{(n)}(x,y).
\end{align*}
This yields
\begin{align*} 
E \left ( d_{n,m} S_n(k;x,y) \right )^2  = & \ \sum_{q =m+1}^{\infty} \frac {J_{q,n}^2(x)} {q!} \frac 1 {q!} \sum_{i,j \leq k} EH_q(X_i)H_q(X_j) \\
\leq & \ F_{(n)}(x,y) \sum_{i,j \leq k} \lvert r(i-j)\rvert^{m+1}.
\end{align*}
Note that the second factor of the product in the last line may depend indirectly on the function $G_n$, because $G_n$ determines $m$, however this is the only influence. For different combinations of $m$ and $D$ the term $\sum_{i,j \leq k} \lvert r(i-j)\rvert^{m+1}$ might have a different asymptotic order. However, in all cases we get (see page 1777 in \citet{DeTa})
\begin{align*}
  \frac 1 {d_{n,m}}\sum_{i,j \leq k} \lvert r(i-j)\rvert^{m+1} & \ \leq C n^{mD-2}L^{-m}(n) k^{1 \vee (2-(m+1))/D} L_1(k) \\
 & \ \leq C   \left ( \frac k n \right )^{1 \vee (2-(m+1)D)} n^{mD -1 \vee (-D)} L_1(k) L^{-m}(n).
  \end{align*}
The result then follows because $L$ and $L_1$ are slowly varying. 
\end{proof}

\begin{Lem} \label{ResttermChaining}
Let $n \in \mathds N$ and $\epsilon >0$. Define the chaining points and $L_{m,n,j}(x)$ as in (\ref{HermiteExpansionIndicatorRed}). Set
\begin{align*}
K = K(n,\epsilon) = \left \lfloor \log_2 \left ( \frac {(m-m^{\ast}+2) \Lambda_n(+\infty)} {\epsilon} n d_{n,m}^{-1} \right ) \right \rfloor+1.
\end{align*}
Then there is a constant $C>0$, such that for all $\lambda >0$
\begin{align*}
P \left ( \sup_{x \in \mathds R} \left \lvert  \frac 1 {d_{n,m}} \sum_{j=1}^l L_{m,n,j}(x_{i_K(x)}(K),x_{i_K(x)+1}(K)-) \right \rvert > \epsilon \right ) \leq C \left ( \frac l n \right )^{2-m^{\ast}D} n^{\lambda - m^{\ast}D}.
\end{align*}
\end{Lem}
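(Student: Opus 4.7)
The plan is to use the $x$-uniform control supplied by $\Lambda_n$ on a chaining cell to collapse the supremum over $x$ onto a finite number of Hermite partial sums, which are then bounded by a routine second-moment tail inequality. First, for $n$ large enough that $m(n)\geq m^{\ast}$, I would use that $J_{q,n}\equiv 0$ for $1\leq q<m^{\ast}$ to rewrite
\begin{align*}
\sum_{j=1}^{l}L_{m,n,j}(x,y) = l\,F_{(n)}(x,y) + \sum_{q=m^{\ast}}^{m}\frac{J_{q,n}(x,y)}{q!}\sum_{j=1}^{l}H_q(X_j),
\end{align*}
which separates a deterministic piece from exactly $m-m^{\ast}+1$ partial sums of Hermite polynomials that are independent of $x$.

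The crucial observation is that $F_{(n)}(x,y)\leq \Lambda_n(x,y)$ and $|J_{q,n}(x,y)|/q!\leq \Lambda_n(x,y)$ for every $0\leq q\leq m$, while (\ref{AbschPartition}) gives the $x$-uniform bound $\Lambda_n(x_{i_K(x)}(K),x_{i_K(x)+1}(K)-)\leq \Lambda_n(+\infty)2^{-K}$. By the choice of $K$, $\Lambda_n(+\infty)2^{-K}/d_{n,m}\leq \epsilon/((m-m^{\ast}+2)n)$, and together with $l\leq n$ this forces
\begin{align*}
\sup_{x\in\mathds R}\frac{1}{d_{n,m}}\left|\sum_{j=1}^{l}L_{m,n,j}(x_{i_K(x)}(K),x_{i_K(x)+1}(K)-)\right|\leq \frac{\epsilon}{m-m^{\ast}+2}\left(1+\frac{1}{n}\sum_{q=m^{\ast}}^{m}\left|\sum_{j=1}^{l}H_q(X_j)\right|\right).
\end{align*}
On the event where the left side exceeds $\epsilon$, at least one $|\sum_{j=1}^{l}H_q(X_j)|$ must exceed $n$, so a union bound yields
\begin{align*}
P\!\left(\sup_{x}\left|\cdots\right|>\epsilon\right)\leq \sum_{q=m^{\ast}}^{m}P\!\left(\left|\sum_{j=1}^{l}H_q(X_j)\right|>n\right).
\end{align*}

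Finally, Markov's inequality with the second moment and the identity $E(\sum_{j\leq l}H_q(X_j))^2=d_{l,q}^{2}\leq C\,l^{2-qD}L^{q}(l)$ gives $P(|\sum_{j\leq l}H_q(X_j)|>n)\leq C\,l^{2-qD}L^{q}(l)/n^2$. For $q=m^{\ast}$ this is already of the form $C(l/n)^{2-m^{\ast}D}n^{-m^{\ast}D}L^{m^{\ast}}(l)$; for $q>m^{\ast}$ the elementary estimate $l^{-(q-m^{\ast})D}\leq 1$ shows the bound is no larger. Absorbing the slowly varying factor into $n^{\lambda}$ via Potter's inequality $L(n)\leq C_{\lambda}n^{\lambda}$ produces the asserted estimate with arbitrary $\lambda>0$. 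The only delicate point is the dependence of the chaining point $x_{i_K(x)}(K)$ on $x$; this is neutralized by the fact that the bound $\Lambda_n(+\infty)2^{-K}$ does not depend on $x$, after which the remainder is a standard Chebyshev-type computation.
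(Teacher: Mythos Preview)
Your proposal is correct and follows essentially the same route as the paper: bound each coefficient $|J_{q,n}(x_{i_K(x)}(K),x_{i_K(x)+1}(K)-)|/q!$ uniformly by $\Lambda_n(+\infty)2^{-K}$, use the choice of $K$ to convert this into the threshold $n$, and then apply Chebyshev with $E(\sum_{j\leq l}H_q(X_j))^2=d_{l,q}^2$. The only cosmetic difference is that you absorb the deterministic $q=0$ piece via $l\leq n$ into the same inequality, whereas the paper simply notes that this term is nonrandom and contributes probability zero; both treatments are equivalent.
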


\begin{proof}
 By construction of the chaining points we have for $q=0, \dots, m$ and for all $x \in \mathds R$
 \begin{align*}
\sup_{x \in \mathds R} \lvert J_{q,n}(x_{i_K(x)}(K),x_{i_K(x)+1}(K)-) /q! \rvert \leq \Lambda_n(+\infty) 2^{-K}.
\end{align*}
Thus for all $x \in \mathds R$
\begin{align*}
& \ \frac 1 {d_{n,m}} \left \lvert \sum_{j=1}^l L_{m,n,j}(x_{i_K(x)}(K),x_{i_K(x)+1}(K)-) \right \rvert \\
\leq & \ \sum_{q=0}^m \lvert J_{q,n}(x_{i_K(x)}(K),x_{i_K(x)+1}(K)-) /q! \rvert \frac 1 {d_{n,m}} \left \lvert \sum_{j=1}^l H_q(X_j) \right \rvert \\
\leq & \ \Lambda_n(+\infty) 2^{-K} \sum_{q=0}^m  \frac 1 {d_{n,m}} \left \lvert \sum_{j=1}^l H_q(X_j) \right \rvert. 
\end{align*}
By definition of $K$
\begin{align*}
\frac {2^K \epsilon}{(m-m^{\ast}+2) \Lambda_n(+\infty)} \geq \frac {n}{d_{n,m}}.
\end{align*}
Therefore we get by Markov's inequality for $q=m^{\ast}, \dots, m$
\begin{align*}
 P \left ( \Lambda_n(+\infty) 2^{-K}  \frac 1 {d_{n,m}} \left \lvert \sum_{j=1}^l H_q(X_j) \right \rvert > \epsilon/(m-m^{\ast}+2)  \right) \leq & \ P \left ( \left \lvert \sum_{j=1}^l H_q(X_j) \right \rvert > n \right ) \\
\leq & \ C \frac {d_{l,q}^2} {n^2} \\ 
\leq & \ C \frac {l^{2-qD}}{n^2} L^q(l) \\
\leq & \ C \left ( \frac l n \right )^{2-m^{\ast}D} n^{\lambda - m^{\ast}D}.
\end{align*}
For $q=0$ the term is deterministic, thus the probability is $0$.
\end{proof}

\begin{proof} [Proof of Theorem \ref{WeakRedPrinArray}]

The two conditions of Lemma \ref{BeweisStrukturDefinierendesLemma} are satisfied (see Lemma \ref{MomentUnglRedPrin2} and Lemma \ref{ResttermChaining} ) with 
\begin{align*}
K = \left \lfloor \log_2 \left ( \frac {(m-m^{\ast}-2) \Lambda_n(+\infty)} {\epsilon} n d_{n,m}^{-1} \right ) \right \rfloor+1.
\end{align*}
Note that $(K+3)^5 \leq C \epsilon^{-1}n^{\delta}$ for any $\delta >0$, see \citet{DeTa}, page 1781. By this fact and by virtue of Lemma \ref{BeweisStrukturDefinierendesLemma}
\begin{align*}
P \left ( \sup_{x} \lvert S_n(l;x) \rvert > \epsilon \right )  \leq & \ C \left ( \frac l n \right) n^{\delta-\gamma} \epsilon^{-3} + C \left ( \frac l n \right)^{2-m^{\ast}D} n^{\lambda -m^{\ast}D} \\
\leq & \ C n ^{-\rho} \left \{ \left ( \frac l n \right)  \epsilon^{-3} + \left ( \frac l n \right)^{2-m^{\ast}D} \right \},
 \end{align*}
 with $ \rho = \min (\gamma - \delta, m^{\ast}D-\lambda) $. Now choose $\delta < \gamma$, then $\rho > 0$ and we have thus proven a reduction principle in $x$. It remains to verify uniformity in $l$. For $n=2^r$ one gets by the same arguments as in the proof of Theorem 3.1 in \citet{DeTa}
\begin{align*}
P \left ( \max_{l \leq n} \sup_{x} \lvert S_n(l;x) \rvert > \epsilon \right ) \leq Cn^{-\kappa}(1+ \epsilon^{-3})
\end{align*}
for any $0< \epsilon \leq 1$ and universal constants $C$ and $\epsilon$. Next consider arbitrary $n$ and define for $r$ such that $2^{r-1} < n\leq 2^r$
\begin{align*}
S_n^{\ast}(l,x) = \frac 1 {d_{2^r,m}} \sum_{j=1}^l (1_{\{G_n(X_j) \leq x\}} - L_{m,n,j}(x) ) \ \ \ \ \text{for } l \leq 2^r,
\end{align*}
where $\{G_n(X_j) \}_{n \in \mathds N, j \leq 2^r}$ is a (slightly modified) array. One obtains
\begin{align*}
P \left ( \max_{l \leq n} \sup_{x} \lvert S_n^{\ast}(l;x) \rvert > \epsilon \right ) \leq C(2^r)^{-\kappa}(1+ \epsilon^{-3}).
\end{align*}
Hence
\begin{align*}
P \left ( \max_{l \leq n} \sup_{x} \lvert S_n(l;x) \rvert > \epsilon \right ) \leq & \ P \left ( \max_{l \leq n} \sup_{x} \lvert S_n^{\ast}(l;x) \rvert > \epsilon \frac {d_{n,m}}{d_{2^r,m}} \right )  \\
\leq & \ \leq C(2^r)^{-\kappa} \left (1+ \epsilon^{-3} \left (\frac {d_{2^r,m}}{d_{n,m}} \right )^3 \right ) \\
\leq & \ Cn^{-\kappa}(1+ \epsilon^{-3}).
\end{align*}
The last line holds since $d_{2^r,m} / d_{n,m}$ is uniformly bounded away from $0$ and $\infty$. Thus, Theorem \ref{WeakRedPrinArray} is proven. 
\end{proof}

\begin{proof} [Proof of Corollary  \ref{KonvEmpProArray}] 
Using the reduction principle, namely Theorem \ref{WeakRedPrinArray}, it remains to show that
\begin{align}
d_{n,m}^{-1}\sum_{q=m^{\ast}}^m \frac {J_{q,n}(x)} {q!} \sum_{i=1}^{\lfloor nt \rfloor}  H_q(X_i) \label{SummeHermite}
\end{align}
converges to the desired limit processes. Define
\begin{align*}
Z_{n,q}(t) = \frac 1 {d_{n,q}} \sum_{i=1}^{\lfloor nt \rfloor} H_q(X_i),
\end{align*}
and note that because of $1/m > D$ the sequences $\{H_q(X_i)\}_{i \geq 1}$ are long-range dependent for $q=m^{\ast}, \dots, m$. Then we have by Theorem 4 of \citet{BaTa}
\begin{align}
\left ( Z_{n,m^{\ast}}, \dots ,Z_{n,m} \right ) \xrightarrow{ \mathcal D} \left (Z_{m^{\ast}}, \dots, Z_{m} \right ),
\end{align}
where convergence takes place in $(D[0,1])^{m-m^{\ast}+1}$, equipped with the uniform metric. Moreover, $(Z_q(t))_{t \in [0,1]}$ are uncorrelated Hermite processes of order $q$. The functions $h_q$ are elements of $D[-\infty,\infty]$ and therefore they are also bounded, see Remark \ref{BemerkungEigenschaftFunktionen}. Hence we may apply the continuous mapping theorem and conclude that
\begin{align*}
\left \{  \sum_{q=m^{\ast}}^m h_q(x) d_{n,q}^{-1} \sum_{i=1}^{\lfloor nt \rfloor} H_q(X_i)  \right \}_{t,x}
\end{align*}
converges in distribution to
\begin{align*}
\left \{ \sum_{q=m^{\ast}}^m h_q(x) Z_q(t) \right \}_{t,x},
\end{align*}
where convergence takes place in $D([0,1]\times [- \infty, \infty]$, equipped with the supremum norm. The result then follows by the uniform convergence of $d_{n,m}/d_{n,q} J_{q,n}(x)$ towards $q! h_q(x)$, the reduction principle and Slutsky's theorem.
\end{proof}

\subsection{Proof of Theorem \ref{ConvChangeAltA} and Theorem \ref{KorAsVertCPStat}} 


We start by proving a reduction principle for the empirical process in presence of a change point. Consider the array $\{Y_{n,i}\}_{n \in \mathds N, i \leq n}$, defined in section \ref{EmpProcessChangePointAlt}, and let $H_{n,i}(x) = P(Y_{n,i} \leq x)$. Define 
\begin{align*}
S_n^{(\tau)}(t,x) = \frac 1 {d_{n,m}} \sum_{i=1}^{\lfloor nt \rfloor} \left (1_{\{Y_{n,i} \leq x\} } - H_{n,i}(x) - \sum_{q=m^{\ast}}^m \frac {J_{q,n,i}(x)} {q!} H_q(X_i) \right),
\end{align*}
where $J_{q,n,i}(x) = E[ 1_{\{Y_{n,i} \leq x\} } H_q(X_i)]$. Note that $J_{q,n,i}(x) = 0$ if $i \leq \lfloor n \tau \rfloor$ and $q <m$. 

\begin{Lem} \label{RedPrinZusammengesetzt}
Let the conditions of Theorem \ref{ConvChangeAltA} hold. Then there are constants $C \geq 0$ and $\kappa >0$ such that for all $\epsilon >0$
\begin{align*}
P \left (\sup_{t \in [0,1]} \sup_{x \in \mathds R} \lvert S_n^{(\tau)} (t,x) \rvert > \epsilon \right ) \leq Cn^{-\kappa}(1+ \epsilon^{-3}).
\end{align*}
\end{Lem}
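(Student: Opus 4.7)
The plan is to reduce the bound on $S_n^{(\tau)}$ to two applications of Theorem \ref{WeakRedPrinArray}, exploiting the fact that the sequence $\{Y_{n,i}\}$ is piecewise-stationary: it is $G(X_i)$ for $i \leq \lfloor n\tau \rfloor$ and $G_n(X_i)$ for $i > \lfloor n\tau \rfloor$.

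First, I would decompose $S_n^{(\tau)}(t,x)$ according to whether $t \leq \tau$ or $t > \tau$. For $t \leq \tau$, the sum only involves the pre-change variables $G(X_i)$, with $J_{q,n,i}(x) = J_q(x)$, and by Assumption A1 only the term $q=m$ contributes in the inner Hermite sum. Thus on this range, $S_n^{(\tau)}(t,x)$ coincides with the remainder in Theorem \ref{WeakRedPrinArray} applied to the stationary sequence $\{G(X_i)\}$ (i.e.\ with $G_n \equiv G$, $m(n) = m^{\ast} = m$). Hence the supremum of $\lvert S_n^{(\tau)}(t,x) \rvert$ over $t \leq \tau$ and $x$ is controlled directly by that theorem, yielding a bound of the form $Cn^{-\kappa}(1+\epsilon^{-3})$.

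For $t > \tau$, I would split the sum at $\lfloor n\tau \rfloor$, writing
\begin{align*}
S_n^{(\tau)}(t,x) = S_n^{(1)}(x) + \bigl( T_n(t,x) - T_n(\tau,x) \bigr),
\end{align*}
where $S_n^{(1)}(x)$ is the (stationary) remainder up to index $\lfloor n\tau \rfloor$ in the sequence $\{G(X_i)\}$, and
\begin{align*}
T_n(s,x) = \frac{1}{d_{n,m}} \sum_{i=1}^{\lfloor ns \rfloor} \left( 1_{\{G_n(X_i) \leq x\}} - F_{(n)}(x) - \sum_{q=m^{\ast}}^m \frac{J_{q,n}(x)}{q!} H_q(X_i) \right).
\end{align*}
The quantity $S_n^{(1)}(x)$ (constant in $t$) is again handled by the Dehling--Taqqu reduction principle, i.e.\ Theorem \ref{WeakRedPrinArray} with $G_n \equiv G$. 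For $T_n$, note that $\{X_i\}$ is stationary, so $\{G_n(X_i)\}_{i \leq n}$ is a row of exactly the kind of triangular array to which Theorem \ref{WeakRedPrinArray} applies, with the Hermite ranks $m(n)$ satisfying $m^{\ast} \leq m(n) \leq m < 1/D$ by Assumption A. Therefore Theorem \ref{WeakRedPrinArray} delivers
\begin{align*}
P\Bigl( \max_{l \leq n} \sup_{x} \lvert T_n(l/n, x) \rvert > \eta \Bigr) \leq C n^{-\kappa}(1 + \eta^{-3}),
\end{align*}
and hence, by the triangle inequality with $\eta = \epsilon/3$, the same type of bound (up to constants) applies to $\sup_{t > \tau} \sup_x \lvert T_n(t,x) - T_n(\tau,x) \rvert$.

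Combining the contributions from the three pieces (pre-change remainder on $t \leq \tau$, pre-change remainder at $t = \tau$, and the post-change difference $T_n(t,x) - T_n(\tau,x)$) via a union bound with $\epsilon$ split into three equal parts gives the claimed inequality. The main technical point I expect to check carefully is that the maximum over $l \leq n$ inside Theorem \ref{WeakRedPrinArray} indeed yields the uniform bound over $t \in [0,1]$ needed here; once that is in hand, no new estimates are required, since Assumption A2 only enters through guaranteeing $m(n) \geq m^{\ast}$ and hence the applicability of Theorem \ref{WeakRedPrinArray} to the post-change array.
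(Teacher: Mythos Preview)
Your approach is essentially identical to the paper's: it introduces the two remainders (your pre-change remainder and your $T_n$, called $S_{n,1}$ and $S_{n,2}$ there), writes $S_n^{(\tau)}(t,x) = S_{n,1}(\tau,x) + S_{n,2}(t,x) - S_{n,2}(\tau,x)$ for $t>\tau$, and applies Theorem \ref{WeakRedPrinArray} twice with a union bound (splitting $\epsilon$ into fourths rather than thirds). One minor correction to your closing remark: the hypothesis of Theorem \ref{WeakRedPrinArray} for the post-change array is $m(n)\leq m$, not $m(n)\geq m^{\ast}$; this holds for large $n$ because $J_{m,n}\to J_m\not\equiv 0$, which is a consequence of Assumption A2 (cf.\ Remark \ref{ErklaerungAss3}(i)).
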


\begin{proof}
Define
\begin{align*}
& S_{n,1}(t,x) = \frac 1 {d_{n,m}} \sum_{j=1}^{\lfloor nt \rfloor} \left (1_{\{G(X_j) \leq x\} } - F(x) -  \frac {J_m(x)} {m!} H_m(X_j) \right ) \\
\text{and} \ \ \ \ & S_{n,2} (t,x) = \frac 1 {d_{n,m}} \sum_{j=1}^{\lfloor nt \rfloor} \left (1_{\{G_n(X_j) \leq x\} } - F_{(n)}(x) - \sum_{q=m^{\ast}}^m \frac {J_{q,n}(x)} {q!} H_q(X_j) \right ).
\end{align*}
By Theorem \ref{WeakRedPrinArray} we have
\begin{align}
P \left (\sup_{t \in [0,1]} \sup_{x \in \mathds R} \lvert S_{n,i}(t,x) \rvert > \epsilon \right ) \leq Cn^{-\kappa}(1+ \epsilon^{-3}) \ \ \ \ i=1,2. \label{DoppelteAnwendungTheorem3Gelb}
\end{align}
Next obtain
\begin{align*}
S_n^{(\tau)}(t,x) = \begin{cases}
       S_{n,1}(t,x), & \text{if } t \leq \tau, \\
       S_{n,2}(t,x) + S_{n,1}(\tau,x) -S_{n,2}(\tau,x), & \text{if } t> \tau.
       \end{cases}
\end{align*}
Therefore, we get, using (\ref{DoppelteAnwendungTheorem3Gelb}) several times,
\begin{align*}
P \left (\sup_{t \in [0,1]} \sup_{x \in \mathds R} \lvert S_n^{(\tau)} (t,x) \rvert > \epsilon \right ) \leq & \ 2 P \left (\sup_{t \in [0,1]} \sup_{x \in \mathds R} \lvert S_{n,1}(t,x) \rvert > \epsilon/4 \right ) \\
& \ + 2 P \left (\sup_{t \in [0,1]} \sup_{x \in \mathds R} \lvert S_{n,2}(t,x) \rvert > \epsilon/4 \right ) \\
\leq & \ 4 Cn^{-\kappa}(1+ \epsilon^{-3}),
\end{align*}

for all $n \in \mathds N$ and all $\epsilon >0$. 
\end{proof}

\begin{Lem} \label{KonvergenzHermiteKoeffizientFunktionen}
Let Assumption A hold. Then for all $q \leq m$
\begin{align}
\sup_{x \in \mathds R} d_{n,m^{\ast}}/d_{n,m} \lvert J_{q,n}(x) -J_q(x) \rvert \rightarrow  0, \label{KonvHermiteKoeffizientenRot}
\end{align}
as $n \to \infty$.
\end{Lem}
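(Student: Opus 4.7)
The plan is to obtain a pointwise bound on $|J_{q,n}(x)-J_q(x)|$ via a H\"older inequality applied to a carefully chosen symmetric-difference event, and then check that the rate provided by Assumption A2 is strong enough when the H\"older exponent is tuned appropriately.

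First, for each $q \leq m$ and $x \in \mathds R$, I would write
\begin{align*}
J_{q,n}(x) - J_q(x) = E\bigl[(1_{\{G_n(X_1) \leq x\}} - 1_{\{G(X_1) \leq x\}}) H_q(X_1)\bigr]
\end{align*}
and observe that the indicator difference vanishes outside the event
\begin{align*}
A_n(x) := \{\min(G(X_1),G_n(X_1)) \leq x < \max(G(X_1),G_n(X_1))\}.
\end{align*}
By inclusion-exclusion, $P(A_n(x)) = P(\min \leq x) - P(\max \leq x)$, so $\sup_x P(A_n(x)) =: p_n$ is exactly the quantity controlled by Assumption A2.

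Next, for a pair of conjugate exponents $p, p'$ (chosen below), H\"older's inequality yields
\begin{align*}
|J_{q,n}(x) - J_q(x)| \leq E[|H_q(X_1)| \, 1_{A_n(x)}] \leq \|H_q(X_1)\|_{L^p} \cdot p_n^{1/p'}.
\end{align*}
Since $X_1$ is standard normal and $H_q$ is a polynomial of degree $q$, the moment $\|H_q(X_1)\|_{L^p}$ is finite for every $p \in [1,\infty)$, and the bound is uniform in $x$. Multiplying by the ratio of normalizations, which satisfies
\begin{align*}
\frac{d_{n,m^\ast}}{d_{n,m}} \sim C\, n^{(m-m^\ast)D/2}\, L^{(m^\ast-m)/2}(n),
\end{align*}
I obtain $\frac{d_{n,m^\ast}}{d_{n,m}} \sup_x |J_{q,n}(x)-J_q(x)| \leq C_q\, n^{(m-m^\ast)D/2} L^{(m^\ast-m)/2}(n) \, p_n^{1/p'}$.

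Finally, I would apply Assumption A2 in the form $p_n = o\bigl(n^{-(m-m^\ast)D(1+\delta)/2}\bigr)$ and select $p' \in (1, 1+\delta)$, e.g., $p' = 1+\delta/2$. The exponent of $n$ in the upper bound becomes
\begin{align*}
\frac{(m-m^\ast)D}{2}\Bigl(1 - \frac{1+\delta}{p'}\Bigr) < 0,
\end{align*}
which dominates the slowly varying factor $L^{(m^\ast-m)/2}(n)$, so the right-hand side tends to $0$. When $m^\ast = m$ the ratio $d_{n,m^\ast}/d_{n,m}$ is bounded and the convergence follows directly from the remark in Assumption A2, so the argument covers both cases.

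The main technical point (and potential obstacle) is the choice of the H\"older exponent: a naive Cauchy--Schwarz bound would require $\delta \geq 1$ to beat the growth of $d_{n,m^\ast}/d_{n,m}$, which A2 does not guarantee. The trick is that, because $X_1$ is Gaussian, $\|H_q(X_1)\|_{L^p}$ is finite for every $p$, so we may take $p'$ arbitrarily close to $1$ and thereby convert the full exponent $(1+\delta)$ in A2 into useful decay, no matter how small $\delta>0$ is.
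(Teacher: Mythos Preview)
Your proof is correct and follows essentially the same route as the paper: both bound $|J_{q,n}(x)-J_q(x)|$ by H\"older's inequality applied to the indicator of the symmetric-difference event $A_n(x)$, exploit that all $L^p$-moments of $H_q(X_1)$ are finite under Gaussianity, and then tune the H\"older exponent (the paper takes conjugate exponents $(p+1)/p$ and $p+1$ with $p>1/\delta$, which is exactly your choice $p'<1+\delta$) so that the decay from Assumption~A2 beats the growth of $d_{n,m^\ast}/d_{n,m}$. Your remark that Cauchy--Schwarz alone would not suffice and that the freedom to push $p'$ close to $1$ is the key point is precisely the content of the paper's argument.
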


\begin{proof}
Using H\"{o}lder's inequality, one has for any $p \in \mathds N$
\begin{align*}
\lvert J_{q,n} (x)- J_q(x) \rvert = & \ \lvert E \left ( ( 1_{ \{ G_n(X_i) \leq x \} } -  1_{ \{ G(X_i) \leq x \} } ) H_q(X_i) \right) \rvert  \\
\leq & \ \left ( E \lvert 1_{ \{ G_n(X_i) \leq x \} } -  1_{ \{ G(X_i) \leq x \} } \rvert ^{(p+1)/p} \right) ^{p/(p+1)} \lVert H_q(X_i) \rVert_{L^{p+1}} \\
\leq & \ C( E \lvert 1_{ \{ G_n(X_i) \leq x \} } -  1_{ \{ G(X_i) \leq x \} } \rvert)^{p/(p+1)}
\end{align*}
Now obtain
\begin{align*}
& \ E \lvert 1_{ \{ G_n(X_i) \leq x \} } -  1_{ \{ G(X_i) \leq x \} } \rvert \\
= & \ P \left ( \{G_n(X_1) \leq x, G(X_1) > x \} \cup \{G_n(X_1) > x, G(X_1) \leq x \} \right ) \\
= & \ 1 - P \left ( \{G_n(X_1) \leq x, G(X_1) \leq x \} \right) - P \left ( \{G_n(X_1) > x, G(X_1) > x \} \right) \\
= & \ P( \min \{G_n(X_1),G(X_1)\} \leq x) - P( \max \{G_n(X_1),G(X_1)\} \leq x) \\
= & \ o(n^{(m^{\ast}-m)D(1+ \delta)/2}),
\end{align*}
for some $\delta >0$. The last line holds uniformly due to Assumptions A2. Finally,
\begin{align*}
& \ d_{n,m^{\ast}}/d_{n,m} \lvert J_{q,n}(x) -J_q(x) \rvert \\
\leq & \ C n^{(m-m^{\ast})D/2} L^{(m^{\ast}-m)/2}(n) ( E \lvert 1_{ \{ G_n(X_i) \leq x \} } -  1_{ \{ G(X_i) \leq x \} } \rvert)^{p/(p+1)} \\
\leq & \ C \left ( n^{(m-m^{\ast})D(p+1)/p/2} E \lvert 1_{ \{ G_n(X_i) \leq x \} } -  1_{ \{ G(X_i) \leq x \} } \rvert \right ) ^{p/(p+1)} \\
=  & \ C \left ( n^{(m-m^{\ast})D(1+1/p)/2} E \lvert 1_{ \{ G_n(X_i) \leq x \} } -  1_{ \{ G(X_i) \leq x \} } \rvert \right ) ^{p/(p+1)}.
\end{align*}
Choosing $p> 1/\delta$, this implies (\ref{KonvHermiteKoeffizientenRot}).
\end{proof}

\begin{proof} [Proof of Theorem  \ref{ConvChangeAltA}]
By definition of the functions $J_{q,n,i}$ we get
\begin{align*}
& \ \frac 1 {d_{n,m}} \sum_{i=1} ^{\lfloor n t \rfloor} \sum_{q=m^{\ast}}^m \frac {J_{q,n,i}(x)} {q!} H_q(X_i) \\
= & \ \frac 1 {d_{n,m}} \frac {J_m(x)} {m!} \sum_{i=1} ^{\lfloor n t \rfloor} H_m(X_i) \\
& \ + 1_{\{t > \tau\}} \sum_{q=m^{\ast}}^{m-1} \frac {d_{n,q}} {d_{n,m}} \frac {J_{q,n}(x)} {q!} \frac 1 {d_{n,q}} \sum_{i= \lfloor n \tau \rfloor +1} ^{\lfloor n t \rfloor} H_q(X_i)\\
 & \ + 1_{\{t > \tau\}} \frac {J_{m,n}(x) - J_m(x) } {m!} \frac 1 {d_{n,m}} \sum_{i= \lfloor n \tau \rfloor +1} ^{\lfloor n t \rfloor} H_m(X_i).
\end{align*}
The second and the third summands are negligible due to the uniform convergence of the functions $J_{q,n}$ (see Lemma \ref{KonvergenzHermiteKoeffizientFunktionen})). The first summand  converges in distribution towards 
\begin{align*}
\frac{J_m(x)}{m!} Z_m(t),
\end{align*}
see \citet{DeTa}. Together with Lemma \ref{RedPrinZusammengesetzt} this finishes the proof.
\end{proof}

\begin{proof} [Proof of Theorem \ref{KorAsVertCPStat}]
We give the proof for a sequence of local alternatives. The asymptotic behavior under the hypothesis then is an immediate consequence. Obtain the following decomposition of the empirical bridge-process
\begin{align}
& \ \frac 1 {d_{n,m}} \left ( \sum_{i=1}^{\lfloor n t \rfloor} 1_{\{Y_{n,i} \leq x \}} - \frac {\lfloor n t \rfloor} {n} \sum_{i=1}^n 1_{\{Y_{n,i} \leq x \}} \right ) \label{EmpiricalBridgeProzess} \\
\nonumber = & \  \frac 1 {d_{n,m}} \left (\sum_{i=1}^{\lfloor n t \rfloor} \left (1_{\{Y_{n,i} \leq x \}} - H_{n,i}(x) \right )- t \sum_{i=1}^n \left (1_{\{Y_{n,i} \leq x \}} - H_{n,i}(x) \right ) \right ) \\
\nonumber & \ + \left ( t - \frac {\lfloor n t \rfloor} {n} \right) \frac 1 {d_{n,m}} \sum_{i=1}^n \left (1_{\{Y_{n,i} \leq x \}} - H_{n,i}(x) \right ) \\
\nonumber  & \ + \frac n {d_{n,m}} \psi_{n,\tau} (t) \left ( F(x) - F_{(n)}(x) \right ),
\end{align}
where
\vspace{-10pt}
\begin{align*}
\psi_{n,\tau} (t) = \begin{cases} 
                             \frac {\lfloor nt \rfloor} n  \left ( 1- \frac {\lfloor n \tau \rfloor} n \right ),  & \text{if }t \leq   \tau  ,\\
                         \frac {\lfloor n \tau \rfloor} n  \left ( 1- \frac {\lfloor n t \rfloor} n \right ), & \text{if } t > \tau.
                                                      \end{cases}
\end{align*}
By uniform convergence of $n/d_{n,m}(F(x) - F_{(n)} (x))$ and $\psi_{n,\tau}(t)$ towards $g(x)$ and $\psi_{\tau}(t)$, respectively, Theorem \ref{ConvChangeAltA} and the continuous mapping theorem, one gets that (\ref{EmpiricalBridgeProzess}) converges weakly towards
\begin{align*}
J_m(x)/(m!) \left ( Z_m(t) - t Z_m(t) \right) + \psi_{\tau} (t) g(x).
\end{align*}
The convergence of the Kolmogorov-Smirnov type statistic then follows from continuity of the application of the supremum norm. The Cram\'{e}r-von Mises statistic $S_n$ can be written $S_n = \sup_{t \in [0,1]} M_n(t)$, where
\begin{align}
\nonumber M_n(t)  = & \ d_{n,m}^{-2}  \int_{\mathds R} \left ( \sum_{i=1}^{ \lfloor nt \rfloor } 1_{ \{ Y_{n,i} \leq x \} } - \frac{\lfloor nt \rfloor}{n} \sum_{i=1}^n 1_{ \{ Y_{n,i} \leq x \} } \right ) ^2 \ d \hat F_n(x)\displaybreak \\
= & \ d_{n,m}^{-2}  \int_{\mathds R} \left ( \sum_{i=1}^{ \lfloor nt \rfloor } 1_{ \{ Y_{n,i} \leq x \} } - \frac{\lfloor nt \rfloor}{n} \sum_{i=1}^n 1_{ \{ Y_{n,i} \leq x \} } \right ) ^2 \ d F(x)  \label{ZerCvMRot} \\
 & \ + d_{n,m}^{-2}   \int_{\mathds R} \left ( \sum_{i=1}^{ \lfloor nt \rfloor } 1_{ \{ Y_{n,i} \leq x \} } - \frac{\lfloor nt \rfloor}{n} \sum_{i=1}^n 1_{ \{ Y_{n,i} \leq x \} } \right ) ^2 \ d (\hat F_n(x)-F(x)) .\label{ZerCvMBlau}
\end{align}
Due to the convergence of (\ref{EmpiricalBridgeProzess}) and the continuous mapping theorem, (\ref{ZerCvMRot}) converges to the desired limit process. Thus, it remains to show that (\ref{ZerCvMBlau}) is negligible. Therefore, obtain 
\begin{align}
\nonumber & \ d_{n,m}^{-2} \int_{\mathds R} \left ( \sum_{i=1}^{ \lfloor nt \rfloor } 1_{ \{ Y_{n,i} \leq x \} } - \frac{\lfloor nt \rfloor}{n} \sum_{i=1}^n 1_{ \{ Y_{n,i} \leq x \} } \right ) ^2 \ d (\hat F_n(x)-F(x)) \\
= & \ \int_{\mathds R} \left ( J_m(x)/(m!) \tilde Z_m(t) - \psi_{\tau}(t)g(x) \right )^2  \ d (\hat F_n(x)-F(x)) \label{ZerCvM2Orange} \\
\nonumber & \ + \int_{\mathds R} \Bigg \{ d_{n,m}^{-2} \left ( \sum_{i=1}^{ \lfloor nt \rfloor } 1_{ \{ Y_{n,i} \leq x \} } - \frac{\lfloor nt \rfloor}{n} \sum_{i=1}^n 1_{ \{ Y_{n,i} \leq x \} } \right ) ^2 \\
& \ \ \ \ \ \ \ \ - \left ( J_m(x)/(m!) \tilde Z_m(t) - \psi_{\tau}(t)g(x) \right )^2 \Bigg \}  \ d (\hat F_n(x)-F(x)). \label{ZerCvM2Gelb}
\end{align}
Using the Skorohod-Dudley-Wichura representation theorem (whose conditions are satisfied because $J_m(x)/(m!) \tilde Z_m(t) - \psi_{\tau}(t)g(x)$ lays almost surely in $C([0,1]\times [-\infty, \infty])$), one can assume without loss of generality that
\begin{align*}
d_{n,m}^{-2} \left ( \sum_{i=1}^{ \lfloor nt \rfloor } 1_{ \{ Y_{n,i} \leq x \} } - \frac{\lfloor nt \rfloor}{n} \sum_{i=1}^n 1_{ \{ Y_{n,i} \leq x \} } \right ) ^2  - \Bigg ( J_m(x)/(m!) \tilde Z_m(t) - \psi_{\tau}(t)g(x) \Bigg )^2
\end{align*}
converges almost surely to $0$, uniformly in $x$ and $t$. Thus, (\ref{ZerCvM2Gelb}) converges to $0$, uniformly in $t$. Next consider (\ref{ZerCvM2Orange})
\begin{align*}
& \ \int_{\mathds R} \left ( J_m(x)/(m!) \tilde Z_m(t) - \psi_{\tau}(t)g(x) \right )^2  \ d (\hat F_n(x)-F(x)) \\
=  & \ (\tilde Z_m(t))^2/(m!)^2 \int_{\mathds R} J_m^2(x)  \ d (\hat F_n(x)-F(x)) \\
 & \ - 2 \tilde Z_m(t) \psi_{\tau}(t) /(m!) \int_{\mathds R} J_m(x) g(x)  \ d (\hat F_n(x)-F(x)) \\
 & \ + \psi_{\tau}^2(t) \int_{\mathds R} g^2(x)  \ d (\hat F_n(x)-F(x)) \\
= & \ I_n -II_n+III_n.
\end{align*}
As a consequence of Theorem \ref{ConvChangeAltA} and $F_{(n)}(x) \rightarrow F(x)$ one gets a weak Glivenko-Cantelli type convergence, in detail 
\begin{align*}
\sup_{x \in \mathds R} \lvert \hat F_n(x) - F(x) \rvert \leq \sup_{x \in \mathds R} \left \lvert \hat F_n(x) - \sum_{i=1}^n \frac {H_{n,i}(x)} n \right \rvert +  \frac {n - \lfloor n \tau \rfloor} n \sup_{x \in \mathds R} \rvert F_{(n)}(x) - F(x) \lvert \xrightarrow{P} 0.
\end{align*}
Moreover, obtain that $J_m(x)$ is of bounded variation (this was also noted in \citet{DeTa}). To see this, let $[a,b]$ be an arbitrary interval and $\{x_i\}_{i=0}^n$ a partition of this interval. Then
\begin{align*}
\sum_{i=0}^{n-1} \lvert J(x_{i+1})-J(x_i)\rvert  = & \ \sum_{i=0}^{n-1} \lvert E [1_{\{ x_i < G(X_1) \leq x_{i+1} \}}H_m(X_1) ]\rvert \\
\leq & \ \sum_{i=0}^{n-1} E [1_{\{ x_i < G(X_1) \leq x_{i+1} \}} \lvert H_m(X_1) \rvert ] \\
= & \ E \left [ \sum_{i=0}^{n-1} 1_{\{ x_i < G(X_1) \leq x_{i+1} \}} \lvert H_m(X_1) \rvert \right ] \\
= & \ E \left [ 1_{\{ G(X_1) \in [a,b] \}} \lvert H_m(X_1) \rvert \right ] \\ 
\leq & \ E \lvert H_m(X_1) \rvert.
\end{align*}
By the boundedness of $J_m$, $J_m^2$ is also of bounded variation and thus integration by parts, together with the weak Glivenko-Cantelli-type reulst, yields
\begin{align*}
I_n = - (\tilde Z_m(t))^2/(m!)^2 \int_{\mathds R}  (\hat F_n(x)-F(x)) \ d J_m^2(x) \xrightarrow{P} 0.
\end{align*}
By definition, the function $g(x)$ is bounded and of bounded variation. Hence the same is true for $g^2(x)$ and by the same arguments as above one gets $III_n = o_P(1)$. Finally, $II_n =o_P(1)$, which can be seen using H\"{o}lders's inequality. This finishes the proof.
\end{proof}

\begin{Bem} 
Note that our proof of the weak convergence of the Cram\'{e}r-von Mises statistic would not work for short-range dependent time series. The reason is the completely different limit behavior of the sequential empirical process. Instead of the semi-degenerate process $J_m(x)Z_m(t)$ one gets a Gaussian process $K(t,x)$. While $J_m$ is of bounded variation, this is not the case for sample paths of $K$. Hence  $\int_{\mathds R} K(t,x) \ d (F_n(x) -F(x))$ cannot be treated simultaneously to $\int_{\mathds R} J_m(x) \ d (F_n(x) -F(x))$.
\end{Bem}

\section*{Acknowledgements} The work has been supported by the Collaborative Research Center ``Statistical modeling of nonlinear dynamic processes'' (SFB 823) of the German Research Foundation (DFG). The author would like to thank Herold Dehling for insightful remarks and fruitful discussions.

\bibliography{Lit}

\end{document}